\def\@currentlabel{2.1}\label{e:dispaa}
\def\@currentlabel{2.21}\label{e:dispau}
\def\@currentlabel{2.22}\label{e:dispav}
\def\@currentlabel{2.23}\label{e:dispaw}
\def\@currentlabel{2.24}\label{e:dispax}
\def\theequation{\thesection.\@arabic\c@equation}
\newcounter{mnotecount}[section]
\newcommand{\rmnote}[1]{}
\renewcommand{\theequation}{\arabic{section}.\arabic{equation}}
\newtheorem{thm}{Theorem}[section]
\theoremstyle{definition}
\newtheorem{theorem}{Theorem}[section]
\newtheorem{lemma}[theorem]{Lemma}
\newtheorem{corollary}[theorem]{Corollary}
\newtheorem{proposition}[theorem]{Proposition}
\newtheorem{remark}[theorem]{Remark}
\newcommand{\Rm}{{\mathbb R}}
\begin{document}
\title[Scattering for subcritical wave equation with potential]{Scattering theory for  the subcritical wave equation with inverse square potential
}

\author[Miao]{Changxing Miao}
\address{Changxing Miao:
\hfill\newline Institute of Applied Physics and Computational
Mathematics, \hfill\newline P. O. Box 8009,\ Beijing,\ China,\
100088,} \email{miao\_changxing@iapcm.ac.cn}

\author[Shen]{Ruipeng Shen}
\address{Ruipeng Shen:
 \hfill\newline  Centre for Applied Mathematics,
Tianjin University,
Tianjin, China
} \email{srpgow@163.com}

\author[Zhao]{Tengfei Zhao}
\address{Tengfei Zhao:
 \hfill\newline School of Mathematics and Physics,
University of Science and Technology Beijing,
Beijing 100083, China} \email{zhao\_tengfei@ustb.edu.cn}

\subjclass[2000]{Primary: 35L05, 35L71.}

\maketitle

\begin{abstract}
We consider the scattering theory for the defocusing energy subcritical wave equations with an inverse square potential.
By employing  the energy flux method 
we establish energy  flux estimates on the light cone.
Then by the characteristic line method and radiation theorem, we show that the radial  finite-energy  solutions scatter to free waves outside of light cones.
Using Morawetz estimates we then obtain
the scattering theory for radial solutions with finite weighted energy initial data.
\vskip0.15cm

\end{abstract}

\section{Introduction}
\setcounter{section}{1}\setcounter{equation}{0}

We consider the nonlinear wave equation with an inverse-square potential in $\Rm^d$,
i.e.,
\begin{equation}
 \label{wave-La-p}
 \left\{\begin{aligned}&  u_{tt} +\Big(- \Delta + \frac{a}{|x|^2} \Big) u = - |u|^{p-1} u, \qquad (x,t)\in \Rm^d \times \Rm,\\
& (u,u_t)|_{t=0} = (u_0,u_1) \in \dot{H}^1 \times L^2,\end{aligned}\right.
\end{equation}
where $d\geq 3$, $p \in [1+\frac{4}{d-1},1+\frac{4}{d-2})$ and $a >-\frac{(d-2)^2}{4} $.
We define
$$\mathcal L_a= -\Delta +\frac{a}{|x|^2}$$
as the Friedrichs extension of the quadratic form defined on $C_c^\infty(\mathbb R^d\backslash\{0\})$ via
\[
g\mapsto \int_{\mathbb R^d} \big(|\nabla g(x)|^2 + a|x|^{-2}|g(x)|^2 \big)\,dx.
\]
 The restriction $a>-\frac{(d-2)^2}{4}$ ensures that the operator $\mathcal L_a$ is positive
definite in $\dot H^1_a(\mathbb R^d)$  via  the sharp Hardy inequalities, and one  finds that the standard Sobolev space $\dot H^1$ is equivalent to the Sobolev space $\dot H_a^1$ defined in terms of $\mathcal L_a$ (see Section~\ref{basic}).  In particular,  $\mathcal L_a=-\Delta$ when $a=0$.

The operator $\mathcal L_a$ arises often in mathematics and physics in scaling limits of more complicated problems in combustion theory, the Dirac equation with Coulomb potential, and the study of perturbations of space-time metrics such as Schwarzschild and Reissner-Nordstr\"om \cite{BPST-2003, KSWW, VazZua, ZhaZhe}.
One interesting feature of the inverse-square potential is that it shares the same scaling with the Laplacian.  In general, one cannot  treat $\mathcal L_a$ as a
 perturbation of $-\Delta$, adding to the mathematical interest of this particular model.

It is easy to find that the solution $u$ to the equation \eqref{wave-La-p} 
satisfies an energy conservation law:
\begin{align*}
  E(u,u_t)&\doteq\int_{\Rm^d} \Big( \frac{1}{2} |\nabla u(x,t) |^2+ \frac{a}{2} \frac{|u(x)|^2}{|x|^2}+ \frac{1}{2} |\partial_t u(x,t)|^2 +\frac{1}{p+1} |u(x,t)|^{p+1}\Big)dx\nonumber\\
  & =E(u_0,u_1).
\end{align*}
If $u$ is a solution to \eqref{wave-La-p}, then, for $\lambda>0$,
 the associated scaling transform
  $u_\lambda(x,t)=\lambda^{-\frac{2}{p-1}} u( \tfrac{x}{\lambda}  ,\tfrac{t}{\lambda} )$
 is also a solution and its  norm
  in space $\dot H^{s_p}(\Rm^d)$
  is invariant, where $s_p= \tfrac{d}{2}-\tfrac{2}{p-1}$.
When $s_p=1$(i.e., $p=p_e\doteq1+\frac{4}{d-2}$), we say that the equation \eqref{wave-La-p} is energy critical. We say that the equation \eqref{wave-La-p} is energy subcritical(or supercritical)  for $s_p<1$(or $s_p>1$). In particular, when $p$ equals the conformal exponent  $p_{conf}\doteq1+\frac{4}{d-1}$, we have
$s_p=\frac12$.

 \subsection{Background and Motivations}

 The presence of the inverse-square potential breaks translation symmetry, introducing new challenges into the analysis of \eqref{wave-La-p}.
 One has to work  in  the context of  dispersive equations in the presence of broken symmetries.
 We first recall the study of  the defocusing nonlinear wave equation
\begin{equation}
 \label{wave-p}
 \left\{\begin{array}{l}  u_{tt} - \Delta u = - |u|^{p-1} u, \qquad (x,t)\in \Rm^d \times \Rm,\\
 (u,u_t)|_{t=0} = (u_0,u_1) \in \dot{H}^1 \times L^2,\end{array}\right. 
\end{equation}
where $d\geq 3$ and $p \in [1+\frac{4}{d-1},1+\frac{4}{d-2}]$.
For the scattering theory of the solution $u$,
we mean that there exists a free wave solution $u_\pm$, i.e., a solution to the linear wave equation $u_{tt} - \Delta u = 0$,  such that
$u-u_\pm $ tend to zero in some suitable space as $t\to\pm\infty$.
For
 the
 energy critical case $p=p_e$, the
global well-posedness and scattering
theory of finite energy solutions to the equation \eqref{wave-p} has been extensively studied by Grillakis
\cite{Grillakis-1990, Grillakis-1992},
Shatah-Struwe \cite{SS-1993,SS-1994},
Bahouri-Shatah \cite{BS-1998}, and
Bahouri-G\'{e}rard \cite{BG-1999}.
 In the case of $p\neq p_e$, there is no conservation law  of same scaling of $\dot H^{s_p}\times \dot H^{s_p-1} $ that provides
 an a priori bound on the critical norm of the solution.
 Under the assumption that 
 the $\dot H^{s_p} \times \dot H^{s_p-1} $ norm of the solution is uniformly bounded, many authors
  studied  the scattering problem of solution for the problem \eqref{wave-p},
   see for example
 \cite{KM-2011,Shen-2014,DL-2015-1,DL-2015-2,DLMM-2018,Rodriguez-2017}.  We also refer to
  \cite{DR-2017, DY-2018}
   for general norms.

 For the defocusing subcritical case,  one may obtain the global well-posedness  for the Cauchy problem \eqref{wave-p} in the energy space
 via the Strichartz estimates and energy conservation law,  see for example \cite{LS-1985}.
 Nevertheless, in this case the scattering theory in  energy spaces remains open.
 Ginibre-Velo \cite{GV-1987} and   Hidano \cite{Hidano-2003} proved that if the initial datum are in the conformal energy spaces,
 then the solutions scatter in both time directions. In \cite{Shen-2017}, Shen
 established
  the scattering results for solutions in larger weighted spaces.
Later, Dodson
obtained
 the scattering results in
the critical Sobolev spaces for
solutions with radial initial datum in the critical space
$\dot B^2_{1,1}\times\dot B^1_{1,1}$ for $(d,p)=(3,3)$
 in \cite{Dodson-2018-APDE}
and later  $\dot H^{s_p} \times \dot H^{s_p-1} $,
 for $d=3$ and $3\leq p<5$ in \cite{Dodson-2018-1,Dodson-2018-2}.
Miao-Yang-Zhao \cite{MYZ-2019}  considered  the case $(d,p)=(5,2)$ for initial datum in the critical space $\dot B^3_{1,1}\times\dot B^2_{1,1}.$
Recently,  Shen developed the energy flux method and obtained scattering results for solutions  with
 initial datum in the
weighted energy spaces in \cite{Shen-2018-1,shenenergy,Shen-2019-1,shenhdradial}.
For the non-radial scattering results, we refer to
 Shen \cite{shen3dnonradial,shenhd} and Yang \cite{Yang-2019}.

\vskip0.3cm

 Now we consider the Cauchy problem of the defocusing wave equation with inverse square potential \eqref{wave-La-p},
 which has  attracted a great deal of interest in recent years, see, e.g., \cite{BPST-2003, KMVZZ-2018, MMZ-2019}
 and in particular \cite{KMVZZ-2018, KMVZZ-NLS, Mur,ZhaZhe} for the case of nonlinear Schr\"odinger equations with an inverse-square potential.
 Burq, Planchon, Stalker and Tahvildar-Zadeh \cite{BPST-2003} proved the  well-posedness of  \eqref{wave-La-p} by establishing the Strichartz estimates
associated with $\mathcal L_a$.  We also refer to Killip-Miao-Visan-Zheng-Zhang \cite{KMVZZ-2018} for the development of
 harmonic analysis tools of the operator $\mathcal{L}_a$ by utilizing the heat kernel bounds.
Later, by employing these tools and the concentration compactness arguments, Miao-Murphy-Zheng \cite{MMZ-2019} proved
 the scattering of solutions to the problem \eqref{wave-La-p}
in the critical case,  i.e., the solution  approaches  the linear solutions associated with the operator $\vec S_a(t) $.
Here $\vec S_a(t) $ denotes the solution operator of the linear equation
$\partial_t^2 u+\big(-\Delta +\tfrac{a}{|x|^2}\big)u=0$. However, the scattering of the energy subcritical situation seems still open for us.

\subsection{Main Results}
This paper is devoted to  the study of scattering theory of solutions for  \eqref{wave-La-p} in sub-critical cases.
We prove the following scattering result for solutions of   \eqref{wave-La-p}  with some additional energy decay.

\begin{theorem} \label{main 2}
Suppose that $3\leq d \leq 6$, $p\in[1+\frac{4}{d-1},1+\frac{4}{d-2})$,
and $a>-\frac{(d-2)^2}4+\big(\frac{(d-2)p-d}{2p}\big)^2$.
Let $u$ be a radial solution to \eqref{wave-La-p} with initial data $(u_0,u_1) \in \dot{H}^1 \times L^2$
so that the inequality
\begin{equation}\label{weight-energy}
 E_{\kappa} (u_0,u_1) \doteq \int_{\Rm^d} (|x|^\kappa + 1)\left(|\nabla u_0(x) |^2 +|u_1(x)|^2 + |u_0(x)|^{p+1} \right) dx < +\infty
\end{equation}
holds for a constant $\kappa \geq \kappa_0 \doteq \frac{(d+2)-(d-2)p}{p+1}$. Then the solution $u$ scatters, that is,  there exists a finite-energy free wave $v^\pm$,  so that
\[
 \lim_{t \rightarrow \pm \infty} \int_{\Rm^d} \left(|\nabla u(x,t) - \nabla v^\pm(x,t)|^2 + |u_t(x,t)-v^\pm_t(x,t)|^2 \right) dx = 0.
\]
\end{theorem}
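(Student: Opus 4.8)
The plan is to combine the exterior scattering furnished by Theorem~\ref{main 1} with a new weighted Morawetz estimate controlling the solution deep inside the light cone. By the time-reversal symmetry of \eqref{wave-La-p} it suffices to treat $t\to+\infty$, so I fix the free wave $v^+$ produced by Theorem~\ref{main 1}, which already captures $u$ outside every cone $\{|x|>t-\eta\}$. Splitting the squared $\dot H^1\times L^2$ difference into its restrictions to $\{|x|>t-\eta\}$ and $\{|x|\le t-\eta\}$, the exterior piece tends to $0$ for each fixed $\eta$ by Theorem~\ref{main 1}; the energy that the finite-energy free wave $v^+$ carries in the deep interior $\{|x|\le t-\eta\}$ converges as $t\to\infty$ to the energy of its radiation field in $\{|x|-t\le-\eta\}$, which vanishes as $\eta\to\infty$. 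Hence the whole theorem reduces to the single assertion that the energy of $u$ escapes to a bounded neighborhood of the light cone, namely
\[
 \lim_{\eta\to\infty}\ \limsup_{t\to\infty}\ \int_{|x|\le t-\eta}\Big(|\nabla u(x,t)|^2+|u_t(x,t)|^2+|u(x,t)|^{p+1}\Big)\,dx=0 .
\]

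To prove this I would establish a weighted Morawetz estimate, extending the classical conformal-energy method (valid for the weight $|x|^2$) down to the sharp weight $|x|^{\kappa_0}$. Testing \eqref{wave-La-p} against a multiplier built from the dilation field $t\partial_t+x\cdot\nabla$ dressed with the weight $|x|^\kappa$ produces an identity in which the defocusing nonlinearity contributes a spacetime term of definite favorable sign, with coefficient governed by the scaling of the equation, while the inverse-square potential contributes a term involving $\int\frac{a}{|x|^2}|u|^2$ weighted by a power of $|x|$. The threshold $\kappa_0=\frac{(d+2)-(d-2)p}{p+1}$ is exactly the value for which the time-integrated error terms are dominated by the weighted energy $E_\kappa(u_0,u_1)$, and the hypothesis $a>-\frac{(d-2)^2}4+\big(\frac{(d-2)p-d}{2p}\big)^2$ is precisely what keeps the quadratic form in the identity coercive once the potential term is absorbed through a sharp weighted Hardy inequality. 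The output I expect is a spacetime bound of the schematic form
\[
 \int_{0}^{\infty}\!\!\int_{\Rm^d}\big(\text{weight}\big)\,|u(x,t)|^{p+1}\,dx\,dt\ \lesssim\ E_\kappa(u_0,u_1)<+\infty .
\]

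From this spacetime bound I would obtain the displayed interior decay. Its finiteness yields in particular $\int_{\Rm^d}|u(x,t)|^{p+1}\,dx\to0$ as $t\to\infty$. I would then upgrade this, together with the light-cone energy-flux estimates underlying Theorem~\ref{main 1}, to control of the full interior energy: these flux estimates bound the flux of $u$ through the lateral boundary of every interior cone, so that the kinetic and gradient energy trapped in $\{|x|\le t-\eta\}$ is dominated by the (vanishing) potential energy together with a flux term that decays as $\eta\to\infty$, while the singular term $\int_{|x|\le t-\eta}\frac{a}{|x|^2}|u|^2$ is absorbed by Hardy's inequality. Combined with the reduction of the first paragraph, this proves scattering for $t\to+\infty$; the case $t\to-\infty$ is identical. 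The limit being a \emph{potential-free} wave is consistent with this picture, since the energy concentrates where $|x|\sim t\to\infty$ and there $\frac{a}{|x|^2}$ is negligible.

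The main obstacle is the weighted Morawetz estimate itself, on two counts. One is the coercivity of the Morawetz quadratic form in the presence of the singular potential at the sharp weight $|x|^{\kappa_0}$: this is exactly where the reinforced lower bound on $a$ enters, via a weighted Hardy inequality whose best constant must be checked against $\big(\frac{(d-2)p-d}{2p}\big)^2$. The other is the passage from control of the potential energy to control of the full interior energy, since Morawetz-type identities naturally see only the nonlinear and potential contributions; I expect this to hinge on feeding back the light-cone energy-flux estimates, which is where the dimensional restriction $3\le d\le6$ and the exterior scattering of Theorem~\ref{main 1} are genuinely used.
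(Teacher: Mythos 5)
Your top-level reduction (exterior region via Theorem \ref{main 1}, interior energy of the free wave via \eqref{free limit 2}, so that everything rests on showing the interior energy of $u$ itself vanishes) matches the paper's skeleton and is sound. The gap lies in how you propose to prove that interior decay. Your key lemma --- a Morawetz identity generated by the multiplier $t\partial_t+x\cdot\nabla$ dressed with the weight $|x|^{\kappa}$, coercive down to $\kappa=\kappa_0$ in the presence of a \emph{negative} inverse-square potential --- is precisely the hard analytic content, and you leave it conjectural. The paper does nothing of the sort: its Morawetz estimate (Proposition \ref{morawetz-prop-0}, Corollary \ref{retard fast energy}) uses the standard \emph{unweighted} multiplier truncated at radius $R$, made coercive for $a<0$ by the local Hardy inequality of Lemma \ref{cutoff energy} and Corollary \ref{lower bound inner}; the weight $|x|^{\kappa}$ never enters any multiplier. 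Instead it enters through finite speed of propagation: truncating the data outside radius $r$ and invoking energy conservation gives the tail bounds $\int_{|x|>r+|t|}(\cdots)\,dx\lesssim r^{-\kappa}E_{\kappa,r}$ and the improved pointwise decay of Lemma \ref{decay of tail} and Corollary \ref{integral estimate kappa}. Corollary \ref{retard fast energy} is then applied with $R=t$ on the window $[t,t+c\,t^{1-\kappa_0}]$, so that the time-integrated interior energy is bounded by the time-integrated \emph{exterior} energy, which decays by the weighted-data assumption; the energy-flux formula converts this time-averaged control into the fixed-time statement of Proposition \ref{energy estimate inside}. This finite-speed mechanism is entirely absent from your proposal, so the hypothesis $E_\kappa<+\infty$ is never used in a provable way. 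Note also that $\kappa_0$ does not arise from balancing terms in any weighted identity: it arises as $1-\kappa_0=2\beta(d,p)$, the decay rate of the characteristic-line method (Proposition \ref{variation of w}), which fixes the width $c\,t^{1-\kappa_0}$ of the boundary layer near the cone.

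Two individual steps would also fail as written. First, from $\iint(\mathrm{weight})\,|u|^{p+1}\,dx\,dt<\infty$ you conclude $\int|u(\cdot,t)|^{p+1}\,dx\to0$; spacetime integrability alone yields decay only along a sequence of times, and upgrading this to a genuine limit is exactly where the paper invests its effort (time-averaging plus the flux formula, as above). Second, your claim that the interior kinetic and gradient energy is ``dominated by the (vanishing) potential energy together with a flux term'' has no identity behind it; flux estimates control what crosses the lateral boundary of a cone, not the bulk gradient energy inside it, and the paper's logic runs in the opposite direction (interior energy is controlled by exterior energy through Morawetz, not by potential energy). Finally, your attribution of the hypothesis $a>-\frac{(d-2)^2}{4}+\bigl(\frac{(d-2)p-d}{2p}\bigr)^2$ to coercivity of a weighted Hardy inequality is incorrect: in the paper this strengthened lower bound is needed for the Strichartz estimates of \cite{BPST-2003} underlying the local theory in $L_t^{2p/((d-2)p-d)}L_x^{2p}$ (see Remark \ref{rem-1.3}), with only a mild appearance in Corollary \ref{retard fast energy} (via $\mu_d+a-2\sigma\geq\frac34$); the Hardy--Morawetz machinery itself works for all $a>-\frac{(d-2)^2}{4}$.
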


\begin{remark}\label{rem-1.3}
We remark that  Theorem \ref{main 2} depend on the  local theory with $(u,u_t) \in C(\Rm; \dot{H}^1 \times L^2)$ and $|u|^{p-1} u \in L_{\rm loc}^1 L^2(\Rm \times \Rm^d)$.
 The condition $|u|^{p-1} u \in L_{\rm loc}^1 L^2 (\Rm \times \Rm^d)$ will be very useful when we need to apply a smooth approximation argument.
The most convenient  Strichartz space is  $L_t^{\frac{2p}{(d-2)p-d} } L^{2p}_x $.  When  $d=3$,
$\frac{2p}{(d-2)p-d}=\frac{2p}{p-3}>2$,  which holds for $p\in[3,5)$.
When  $d\geq 4$, the conditions
$$\tfrac{2p}{(d-2)p-d}\geq 2, \;\;\text{\rm and}\;\;p\in[p_{conf},p_e) $$
imply that we need the restriction $d\leq 6.$ For the higher dimension $d\geq7$, one may consider the exotic Strichartz estimates as in \cite{Foschi-2005, Visan-2007}.
On the other hand, the assumption of $a$ in Theorem \ref{main 2} originates from  the Strichartz estimates in Proposition \ref {Strichartz}.
In fact,
$\big(\tfrac{(d-2)p-d}{2p}\big)^2$ is increasing in $p$ and
corresponds to
the exponents in
\cite{MMZ-2019} when $d=3,4$ and $p=p_e$.
\end{remark}




Now, 
 we   outline  the proof of Theorem \ref{main 2}  and do a few suitable reductions.
For the case of  $a>0$,
 the proof  is similar to the case $a=0$ of \cite{shenhdradial}. In fact, one can show the energy flux estimates  on the light cones  by using the energy flux formula
\begin{align}\nonumber
  &E(t_2; B(0,t_2-\eta)) - E(t_1; B(0,t_1-\eta)) \\
  =& \frac{1}{\sqrt{2}} \int_{|x|=t-\eta, t_1<t<t_2} \Big(\frac{1}{2}|u_r+u_t|^2 +\frac{a}{2} \frac{|u|^2}{|x|^2} + \frac{1}{p+1}|u|^{p+1}\Big) dS.
\label{energy-flux-a>0}
 \end{align}
By making use of 
  the characteristic line method, one can show Theorem \ref{main 1} holds since in this case 
   $a \frac{u}{|x|^2}$
   is a defocusing term with a good spatial decay.
Next, for the solution with weighted energy, one can show the
scattering results by utilizing the energy distribution estimates
\begin{align}\nonumber
 & \int_{R<|t|<T} \int_{|x|< R} \left(\frac{1}{2}|\nabla u|^2 +\frac{a}{2} \frac{|u|^2}{|x|^2}+\frac{1}{2}| u_t|^2 + \frac{1}{p+1}|u|^{p+1}\right) dxdt \\
 \leq& \int_{-R}^R \int_{|x|> R} \left(\frac{1}{2}|\nabla u|^2 +\frac{a}{2} \frac{|u|^2}{|x|^2}+\frac{1}{2}| u_t|^2 + \frac{1}{p+1}|u|^{p+1}\right) dxdt
 ,
 \label{coro-Morawetz-a>0}
 \end{align}
  which follow from the Morawetz estimates
\begin{align}\nonumber
  &\frac1{2R} \int_{T_1}^{T_2} \int_{|x|< R}
  \left(|\nabla u|^2 +a \frac{|u|^2}{|x|^2}+| u_t|^2  + \frac{(d-1)(p-1)-2}{p+1} |u|^{p+1} \right) dxdt  \\
 & + \frac{d-1}{4R^2}\int_{T_1}^{T_2} \int_{|x|=R} |u|^2dS(x)dt  +   \int_{T_1}^{T_2}\int_{|x|\geq R} \left(  \frac{(d-1)(p-1)}{2(p+1)}  \frac{|u|^{p+1}}{|x|}+  (a+\lambda_d) \frac{|u|^2}{|x|^3}\right) dxdt\nonumber \\
 \leq& 2E ,\label{Morawetz-a>0}
  \end{align}
for some constant $\lambda_d\geq0.$ For more details, we refer to \cite{shenhdradial}.

We only focus on the case $a<0$ from now on. From  the  formula
\eqref{energy-flux-a>0}, we prove the energy flux estimates on light cones by using the Hardy
 inequality and the radial decay properties in Lemma \ref{pointwise estimate 2} of the radial solutions.
Thanks to these estimates and the radiation 
field for free waves, we are able to prove the following proposition by utilizing the characteristics lines method,
which corresponds to the scattering result
in the region outside the light cone.

\begin{proposition}[Exterior scattering] \label{main 1}
Suppose that $3\leq d \leq 6$, $p\in[1+\frac{4}{d-1},1+\frac{4}{d-2})$,
and $a>-\frac{(d-2)^2}4+\big(\frac{(d-2)p-d}{2p}\big)^2$.
Let $u$ be a radial solution to \eqref{wave-La-p} with a finite energy. Then there exist finite-energy free waves $  v^{\pm}$(solutions to the linear wave equation $\partial_t^2 v - \Delta v=0$)
so that
\[
 \lim_{t \rightarrow \pm\infty} \int_{  |x|>|t|-\eta} \left(|\nabla u(x,t) - \nabla v^{\pm}(x,t)|^2 + |u_t(x,t)- v^{\pm}_t(x,t)|^2 \right) dx = 0, \;\forall \eta \in \Rm.
\]
\end{proposition}

Next,
for the proof of Theorem \ref{main 2}, we need to establish  the decay estimate of the energy in the interior of light cones.   Since the
inequality \eqref{Morawetz-a>0}   seems not  to hold for $a<0$,  we establish a  modified version of
Morawetz estimates, which heavily depends on the Hardy inequalities on a local region.
Employing the aforementioned modified Morawetz estimates, one can prove the energy decay
in the interior of light cones
by introducing a cut-off version of the kinetic energy $J^{\pm}_u$.
By making use of the improved pointwise estimate \eqref{basic-eq-4} in Lemma \ref{pointwise estimate 2} and
the energy decay of $u$ in exterior of light cone,  we prove the
scattering results for solutions with an additional energy decay.


\begin{remark}
  As noted in \cite{shenhdradial}, there exists  radial smooth initial datum $(u_0,u_1)$ such that the assumptions of Theorem \ref{main 2} hold, but $(u_0,u_1)$ do not belong to the scaling invariant spaces  $\dot H^{s_p}\times \dot H^{s_p}(\Rm^d) $.
  In fact, suppose
 \begin{equation}
  u_0(x)=(1+|x|)^{-\frac{2(p+d+1)}{(p+1)^2}-\varepsilon} \text{ ~~and ~~} \left|\nabla u_0(x)\right| \approx (1+|x|)^{-\frac{2(p+d+1)}{(p+1)^2}-1-\varepsilon},   |x|\gg1.
 \end{equation}
 By a direct computation, we have $(u_0,0)$ satisfies the assumption of
 Theorem \ref{main 2} with $\kappa=\kappa_0$. But $u_0 \not\in L^\frac{d(p-1)}{2} (\Rm^d)$ when $\varepsilon$ is sufficiently small.  Thus $u_0 \not\in \dot H^{s_p}(\Rm^d)$ by the Sobolev embedding.

\end{remark}

\begin{remark}
For the case $-\lambda_d<a$, one may improve the
 result of Theorem \ref{main 2} slightly by showing the inward/outward energy flux and
the weighted Morawetz estimates as in  \cite{shen3dnonradial}. Here $\lambda_d = (d-1)(d-3)/4$.
But the minimal decay rate $\kappa_0$ of
energy in \eqref{weight-energy}  can not be further improved by the inward/outward energy theory.
While for the case $a<-\lambda_d$, the inward/outward energy flux formula may be not true
since the extra term
$\iint_{\Rm\times\Rm^3} \frac{|u|^2}{|x|^3}dxdt$ may be infinite.
\end{remark}


\begin{remark}

In the appendix we consider the scattering
theory (which may be of interest in its own right) for the linear wave equation with inverse
square potential
  \[ \label{wave-Linear}
 \left\{\begin{array}{l}  u_{tt} +\left(- \Delta + \frac{a}{|x|^2} \right) u = 0, \qquad (x,t)\in \Rm^d \times \Rm,\\
 (u,u_t)|_{t=0} = (u_0,u_1),\end{array}\right. \tag{$LW_a$}
\]
where $d\geq3$ and $a>-\frac{(d-2)^2}{4}$.
By the Strichartz estimates of $\vec S_0(t)$ and the generalized Morawetz estimates, one may obtain the scattering result of solution of \eqref{wave-Linear} in $\dot H^\frac12 \times \dot H^{-\frac12}(\Rm^d)$. 
In fact, by the Duhamel formula, for $t_1<t_2 $, we have
\begin{equation}\label{Duhamel-1}
 \vec S_0(-t_2)(u(t_2),u_t(t_2))(x) - \vec S_0 (-t_1)(u(t_1),u_t(t_1))(x)  =
\int_{t_1}^{t_2} \vec S_0(-s) \big[0, \tfrac{a}{|\cdot|^2} u(\cdot,s) \big](x)ds.
\end{equation}
Then, by the square function estimates\footnote{ One can also use the Strichartz estimates on $L^2_t L^{\tfrac{2d}{d-2},2 }_x $ here but with radial assumption if $d=3$. For the cases $d\geq 4,$ we refer to \cite{KT-1998}. If $d=3$, 
 this follows from the 
 Strichartz estimates for radial data
(\cite[Theorem 1.3]{Sterbenz2005}) and real interpolation estimates in \cite[Section 6]{KT-1998}.}
presented in the appendix and the generalized H\"older inequality, we have
\begin{align}
      &  \left\| \vec S_0(-t_1)(u(t_1),u_t(t_1)) - \vec S_0(-t_2)(u(t_2),u_t(t_2))  \right\|_{\dot H^\frac12  \times \dot H^{-\frac12} (\Rm^d)}\nonumber\\
  \lesssim~ & \left\|\tfrac{a}{|x|^2}u(x,t) \right\|_{L^{\tfrac{2d}{d+2},2 }_x L^2_t ([t_1,t_2]  \times \Rm^d ) }\nonumber\\
  \lesssim ~& \left\| |x|^{- 1}\right\|_{L^{d ,\infty}_x(\Rm^d)} \left\| \tfrac{a}{|x| } u(x,t) \right\|_{L^2_{t,x} (\Rm^d\times[t_1,t_2])} \to 0,
  \label{Duhamel-3}
\end{align}
as $t_2>t_1\to \infty,$
where $L^{\alpha,\beta}_x$ are Lorentz spaces and we have used   the generalized Morawetz estimate  as in \cite[Theorem 2]{BPST-2003}, i.e.,
\begin{equation}
\|  \tfrac{1}{|x| } u(x,t) \|_{L^2_{t,x} (\Rm\times\Rm^d)}  \lesssim \|(u_0,u_1)\|_{\dot H^\frac12  \times \dot H^{-\frac12} (\Rm^d)}.
\end{equation}
However, 
 generally speaking, 
for scattering results in the energy space $\dot H^1\times L^2(\Rm^d)$,
 one may need the associated square function estimates on $L^{\frac{2d}{d-1},2}_xL^2_t(\Rm\times\Rm^d)$ or
the Strichartz estimates on $L^2_tL^{\frac{2d}{d-1},2}_x(\Rm\times\Rm^d)$.
But,  both these tools are not true even for radial case. In fact, it is easy to see  that square function estimates do not hold via Plancherel and  the fact $| \widehat{ d\sigma}_{S^{d-1} }(\xi) | = O((1+|\xi|)^{\frac{1-d}{2}})$).  On  the other hand, by making use of  Proposition \ref{decay-out-cone} and H\"older inequalities,  we have
\begin{equation}
 \| |e^{it|\nabla|} f \|_{L^q_x{ (||x|-|t||\lesssim 1) }}  \gtrsim
  \| |e^{it|\nabla|} f \|_{L^2_x{ (||x|-|t||\lesssim 1) }} |t|^{-(d-1)(\frac12-\frac1q)}
  \gtrsim |t|^{-(d-1)(\frac12-\frac1q)}.
 \end{equation}
 This inequality 
  shows 
  us that  $L^p_tL^q_x(\Rm\times\Rm^d)$  is acceptable if
 $\frac{1}{p}+\frac{d-1}{q}<\frac{d-1}{2}.$

 Fortunately, we can obtain scattering theory of radial finite energy solutions to the linear equation \eqref{wave-Linear} in the appendix by the argument of \cite{shenenergy} and a series of estimates. This fact shows us that  the radial solutions of the energy critical nonlinear wave equations scatter to free waves, see \cite{MMZ-2019}.

%

\end{remark}



\subsection{The structure of this paper}
 This article is organized as follows: In Section \ref{basic}, we give
technical lemmas  and well-posedness theory of Cauchy problem  \eqref{wave-La-p}.
 In Section \ref{energyflux}, we prove the energy flux formula of the solution to the Cauchy problem \eqref{wave-La-p}.
 We establish a class of local Morawetz estimates associated with the Cauchy problem\eqref{wave-La-p} in Section \ref{Morawetz-sec}.
In Section \ref{mtheod-characteristic}, we prove  Proposition \ref{main 1} by making use of the characteristic lines method developed in \cite{shenenergy}.
In Section \ref{further}  we establish a series of local energy estimates,
 and complete the proof of Theorem \ref{main 2} by making use of  Morawetz estimates established in Section  \ref{Morawetz-sec}. In the appendix, we prove the asymptotic behaviour of linear solution,
  and establish the square function estimate in Lorentz space by Stein-Tomas restriction theorem.

We conclude this section by introducing some notations. We denote by $A\lesssim B$ the fact that the inequality $A\le C B$ holds with an implicit constant C depending on $p$ and $a$. We denote by  $A \approx B$ if $A\lesssim B$ and $B \lesssim A$.
Sometimes we also allow these constants to depend on the energy $E$  and weighted energy $E_\kappa$, especially in the last two sections.

The derivative operator $\nabla$ refers to the spatial  variable only. $\partial_r=\frac{x}{|x|} \cdot \nabla$ denotes the derivative in the radial direction  and $\slashed{\nabla}=\nabla - \frac{x}{|x|} \partial_r$ denotes the angular derivative.
Let $u(x, t)$ be a spatially radial function. By convention $u(r, t)$ represents
the value of $u(x, t)$ when $|x|=r$.


\section{Technical Lemmas and well-posedness theory}\label{basic}
\setcounter{section}{2}\setcounter{equation}{0}

First, we recall the sharp Hardy inequality, for $d\geq 3,$
\begin{equation}
  \frac{(d-2)^2}{4} \int_{\Rm^d} \frac{|u(x)|^2}{|x|^2} dx\leq
  \int_{\Rm^d} |\nabla u(x)|^2 dx.\label{basic-eq-1}
\end{equation}

 For $a \in(-\frac{(d-2)^2}{4},\infty  )$,  we have the rough identity as follows
\begin{equation} E \approx \int_{\Rm^d} \Big(|\nabla u(x,t)|^2 + \frac{|u(x,t)|^2}{|x|^2}+| u_t(x,t)|^2 + |u(x,t)|^{p+1}\Big) dx. \label{basic-eq-2}
\end{equation}
Thus for any region $\Sigma \subset \Rm^d$ we have
\begin{equation}
 \int_{\Sigma} \Big(\frac{1}{2}|\nabla u(x,t)|^2 +\frac{a}{2} \frac{|u(x,t)|^2}{|x|^2}+\frac{1}{2}| u_t(x,t)|^2 + \frac{1}{p+1}|u(x,t)|^{p+1}\Big) dx \lesssim E.
\label{basic-eq-3}
\end{equation}
 \begin{lemma}[Pointwise Estimate, see Lemma 5.6 in \cite{shenenergy}] \label{pointwise estimate 2}
 All radial $\dot{H}^1(\Rm^d)$ functions $u$ satisfy
 $$  |u(r)| \lesssim_d  r^{-(d-2)/2} \|u\|_{\dot{H}^1}, \qquad r>0.$$
 If $u$ also satisfies $u \in L^{p+1} (\Rm^d)$, then its decay is stronger as $r \rightarrow +\infty$, i.e.,
 \begin{equation}
  |u(r)| \lesssim_d r^{-\frac{2(d-1)}{p+3}} \|u\|_{\dot{H}^1}^{\frac{2}{p+3}} \|u\|_{L^{p+1}}^{\frac{p+1}{p+3}}, \qquad r>0.
 \label{basic-eq-4}\end{equation}
\end{lemma}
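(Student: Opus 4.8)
The plan is to derive both bounds from the one--dimensional fundamental theorem of calculus applied to the radial profile, combined with the Cauchy--Schwarz inequality in the weighted measure $s^{d-1}\,ds$. Throughout I write $u(s)$ for the radial profile and use the identities
\[
\|u\|_{\dot H^1}^2 = \omega_{d-1}\int_0^\infty |u'(s)|^2 s^{d-1}\,ds, \qquad \|u\|_{L^{p+1}}^{p+1} = \omega_{d-1}\int_0^\infty |u(s)|^{p+1} s^{d-1}\,ds,
\]
where $\omega_{d-1}$ denotes the surface area of the unit sphere $S^{d-1}$. By density of radial $C_c^\infty(\Rm^d\setminus\{0\})$ functions in $\dot H^1$ for $d\ge 3$, it suffices to prove both estimates for smooth, compactly supported profiles, for which all boundary terms at $s=+\infty$ vanish; the general case then follows by approximation.

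\textbf{First estimate.} I would write $u(r) = -\int_r^\infty u'(s)\,ds$ and insert the weight as $|u'(s)| = \bigl(|u'(s)| s^{(d-1)/2}\bigr)\, s^{-(d-1)/2}$. Cauchy--Schwarz then gives
\[
|u(r)| \le \Bigl(\int_r^\infty |u'(s)|^2 s^{d-1}\,ds\Bigr)^{1/2}\Bigl(\int_r^\infty s^{-(d-1)}\,ds\Bigr)^{1/2}.
\]
Since $d\ge 3$, the second factor equals $(d-2)^{-1/2} r^{-(d-2)/2}$, while the first is bounded by $\omega_{d-1}^{-1/2}\|u\|_{\dot H^1}$; this yields $|u(r)|\lesssim_d r^{-(d-2)/2}\|u\|_{\dot H^1}$.

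\textbf{Second estimate.} The key is to apply the fundamental theorem of calculus not to $u$ but to the power $|u|^{(p+3)/2}$, the exponent being chosen so that the two factors produced below reproduce exactly one power of $\|u\|_{\dot H^1}$ and $(p+1)/2$ powers of $\|u\|_{L^{p+1}}$. Explicitly,
\[
|u(r)|^{\frac{p+3}{2}} = -\int_r^\infty \frac{d}{ds}|u(s)|^{\frac{p+3}{2}}\,ds \le \frac{p+3}{2}\int_r^\infty |u(s)|^{\frac{p+1}{2}} |u'(s)|\,ds.
\]
I split the weight evenly, writing the integrand as $\bigl(|u|^{(p+1)/2} s^{(d-1)/2}\bigr)\bigl(|u'| s^{(d-1)/2}\bigr) s^{-(d-1)}$, bound $s^{-(d-1)} \le r^{-(d-1)}$ on $[r,\infty)$, and apply Cauchy--Schwarz to the two bracketed factors, which gives
\[
|u(r)|^{\frac{p+3}{2}} \lesssim r^{-(d-1)} \Bigl(\int_r^\infty |u|^{p+1} s^{d-1}\,ds\Bigr)^{1/2}\Bigl(\int_r^\infty |u'|^2 s^{d-1}\,ds\Bigr)^{1/2} \lesssim r^{-(d-1)}\|u\|_{L^{p+1}}^{\frac{p+1}{2}}\|u\|_{\dot H^1}.
\]
Raising to the power $2/(p+3)$ produces exactly the claimed bound \eqref{basic-eq-4}.

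I do not anticipate a serious obstacle: the only genuine choice is the exponent $(p+3)/2$, whose correctness can be confirmed by a homogeneity check, since the weights $\tfrac{2}{p+3}$ and $\tfrac{p+1}{p+3}$ sum to $1$, consistent with a linear pointwise bound. The one point requiring care is the vanishing of the boundary term at infinity, which is why I reduce to compactly supported profiles first; for the second estimate one may alternatively observe that the derived bound is itself strong enough to justify the limit \emph{a posteriori}.
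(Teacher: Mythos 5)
Your proof is correct, and it coincides with the standard argument for this lemma: the paper itself states the result without proof, citing \cite{shenenergy} and \cite{shenhdradial}, where exactly your computation appears --- the fundamental theorem of calculus applied to $|u|^{(p+3)/2}$ (respectively to $u$ itself), followed by Cauchy--Schwarz with the weight $s^{(d-1)/2}$ split between the two factors. The only point worth noting is that your density reduction should approximate simultaneously in $\dot H^1$ and $L^{p+1}$ (truncation plus mollification does this), but as you observe, the boundary term at infinity can also be justified \emph{a posteriori}, so the argument stands as written.
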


Now, we recall some properties of the operator $\mathcal{L}_a=-\Delta +\frac{a }{|x|^2}.$
For $r\in(1,\infty)$, we write $\dot H^{1,r}_a$ and $H^{1,r}_a$ for the homogeneous and inhomogeneous space defined in terms of $\mathcal{L}_a$; these spaces have norms
$$\|f\|_{\dot H^{1,r}_a}= \|\sqrt{\mathcal{L}_a  } f \|_{L^r},\quad \;
\|f\|_{  H^{1,r}_a}= \|\sqrt{1+\mathcal{L}_a  } f \|_{L^r}. $$
We write $\dot H^{1,2}_a= \dot H^1_a$ and denote $\sigma= \frac{d-2}{2}-\sqrt{\left(\tfrac{d-2}{2} \right)^2+a } $.

\begin{lemma}[Equivalence of Sobolev spaces, see Theorem 1.2 in
\cite{KMVZZ-2018}]
 Let $d \geq 3$, $a > -\frac{(d-2)^2}{4}$, and $s\in(0,2)$.
\begin{itemize}
  \item \; If $p\in (1,\infty)$ satifies $\frac{s+\sigma}{d}<\frac1p<\min\left\{ 1,\frac{d-\sigma}{d}
      \right\}$, then
      \[  \||\nabla|^s f\|_{L^p(\mathbb{R}^d)} \lesssim \|\mathcal{L}^\frac{s}{2}_a  f\|_{L^p(\mathbb{R}^d)}.    \]
  \item \; If $p\in (1,\infty)$ satifies $\max\left\{  \frac{s}{d}, \frac{\sigma}{d} \right\}<\frac1p<\min\left\{ 1,\frac{d-\sigma}{d}
      \right\}$, then
      \[  \|\mathcal{L}^\frac{s}{2}_a  f\|_{L^p(\mathbb{R}^d)} \lesssim   \||\nabla|^s f\|_{L^p(\mathbb{R}^d)}.    \]
\end{itemize}
\end{lemma}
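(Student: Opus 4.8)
The statement is the standard equivalence of $\mathcal{L}_a$-adapted and classical Sobolev norms, so the plan is to reproduce the semigroup/heat-kernel argument behind it. Both inequalities are equivalent to $L^p$-boundedness of a single transference operator. For the first, setting $g=\mathcal{L}_a^{s/2}f$ reduces it to $\||\nabla|^s\mathcal{L}_a^{-s/2}g\|_{L^p(\Rm^d)}\lesssim\|g\|_{L^p(\Rm^d)}$; for the second, setting $g=|\nabla|^sf$ reduces it to $\|\mathcal{L}_a^{s/2}|\nabla|^{-s}g\|_{L^p(\Rm^d)}\lesssim\|g\|_{L^p(\Rm^d)}$. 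Thus the whole lemma amounts to boundedness on $L^p$ of the two operators $T_1=|\nabla|^s\mathcal{L}_a^{-s/2}$ and $T_2=\mathcal{L}_a^{s/2}|\nabla|^{-s}$, for $1/p$ in the stated windows.

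First I would record the analytic input. Since $a>-\tfrac{(d-2)^2}{4}$, the sharp Hardy inequality \eqref{basic-eq-1} makes $\mathcal{L}_a$ a nonnegative self-adjoint operator, so its fractional powers are defined by the spectral calculus: negative powers through the subordination formula $\mathcal{L}_a^{-s/2}=c_s\int_0^\infty t^{s/2}e^{-t\mathcal{L}_a}\frac{dt}{t}$, and the positive power $\mathcal{L}_a^{s/2}$ (for $s\in(0,2)$, which is exactly the admissible range here) through the Balakrishnan representation $\mathcal{L}_a^{s/2}=c_s'\int_0^\infty t^{-s/2}\big(I-e^{-t\mathcal{L}_a}\big)\frac{dt}{t}$. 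The essential ingredient is the weighted Gaussian heat-kernel bound
\[
  e^{-t\mathcal{L}_a}(x,y)\lesssim t^{-d/2}\Big(1+\tfrac{\sqrt t}{|x|}\Big)^{\sigma}\Big(1+\tfrac{\sqrt t}{|y|}\Big)^{\sigma}\exp\!\Big(-\tfrac{|x-y|^2}{ct}\Big),
\]
with $\sigma=\tfrac{d-2}{2}-\sqrt{(\tfrac{d-2}{2})^2+a}$; this is where the parameter $\sigma$, and hence all the exponent restrictions, enters the analysis.

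Next I would pass from the semigroup bound to pointwise bounds on the kernels of $T_1$ and $T_2$. Inserting the subordination and Balakrishnan integrals, applying $|\nabla|^{s}$ or $|\nabla|^{-s}$, and integrating the heat-kernel bound against the appropriate power of $t$, one obtains kernels comparable to $|x-y|^{s-d}$ in the region away from the origin but carrying, near the origin, singular weights built from powers $|x|^{\pm\sigma}$ and $|y|^{\pm\sigma}$. The two claimed inequalities then follow by estimating these kernels on $L^p$ via a Schur test together with a Hardy--Littlewood--Sobolev bound for the homogeneous tail.

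I expect the main obstacle to be precisely the bookkeeping of the singular weights at $x=0$: the homogeneity-$\sigma$ factors impose $|x|^{-\sigma}$-type integrability constraints which, matched against the local $|x-y|^{s-d}$ singularity and the decay at infinity, are what produce the sharp windows $\tfrac{s+\sigma}{d}<\tfrac1p<\min\{1,\tfrac{d-\sigma}{d}\}$ for $T_1$ and $\max\{\tfrac{s}{d},\tfrac{\sigma}{d}\}<\tfrac1p<\min\{1,\tfrac{d-\sigma}{d}\}$ for $T_2$. The asymmetry of the lower endpoints reflects the order of composition relative to the weight: in $T_1$ the differentiation $|\nabla|^s$ acts after the weighted operator $\mathcal{L}_a^{-s/2}$ and so sharpens the $|x|^{-\sigma}$ singularity into an effective $s+\sigma$, whereas in $T_2$ the smoothing $|\nabla|^{-s}$ acts first and the two singularities ($s$ from differentiation, $\sigma$ from the weight) need only be separately integrable. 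Since this equivalence is exactly the content of \cite{KMVZZ-2018}, in the paper I would simply invoke it; the sketch above records the mechanism.
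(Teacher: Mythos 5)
The paper offers no proof of this lemma at all: it is imported verbatim from \cite{KMVZZ-2018}, and your final sentence (``I would simply invoke it'') is exactly what the paper does, so on the operative level your proposal and the paper coincide. That said, since you chose to sketch the internal mechanism, one inaccuracy is worth flagging so you do not mistake the sketch for a workable proof. Your reduction to the $L^p$-boundedness of $T_1=|\nabla|^s\mathcal{L}_a^{-s/2}$ and $T_2=\mathcal{L}_a^{s/2}|\nabla|^{-s}$ is correct, and the weighted Gaussian heat-kernel bound with the exponent $\sigma$ is indeed the essential analytic input. But $T_1$ and $T_2$ are operators of order zero (homogeneous of degree $-d$ under scaling), so their kernels behave like $|x-y|^{-d}$ near the diagonal, not like $|x-y|^{s-d}$; the latter is the kernel of $|\nabla|^{-s}$ or $\mathcal{L}_a^{-s/2}$ alone. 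A non-integrable Calder\'on--Zygmund-type singularity cannot be handled by a Schur test plus Hardy--Littlewood--Sobolev, so the final step of your sketch would fail as stated. The argument in \cite{KMVZZ-2018} instead runs through $L^p$-boundedness of the imaginary powers $\mathcal{L}_a^{iy}$ and of the Riesz transform $\nabla\mathcal{L}_a^{-1/2}$ (both derived from the heat-kernel bounds via spectral-multiplier technology), combined with Stein's complex interpolation for the analytic family $\mathcal{L}_a^{z/2}(-\Delta)^{-z/2}$; the $s$-dependent windows for $1/p$, including the asymmetry you correctly observed between the two inequalities, emerge from that interpolation rather than from a kernel-by-kernel Schur estimate. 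None of this affects the paper, since the correct move --- and the one both you and the authors make --- is simply to cite the reference.
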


\subsection{Local theory}\label{local}
We consider the inhomogeneous wave equation
\begin{equation}\label{inh-wave-La}
 \left\{ \begin{aligned}
    \partial_t^2 u +\mathcal{L}_a u&=F(x,t),\quad (x,t)\in \mathbb{R}^d\times\mathbb{R} \\
    (u,u_t)(x,0)&=(u_0,u_1)\in \dot H^1\times L^2(\mathbb{R}^d).
  \end{aligned}\right.
\end{equation}
First, we record  the Strichartz estimates. 

\begin{proposition}[Strichartz estimates \cite{BPST-2003} \cite{MMZ-2019}]\label{Strichartz}
  Let   $ 2\leq q \leq \infty, 2\leq r < \infty$  such that the admissible condition
   $\frac{1}{q} +\frac{d-1}{2r} \leq \frac{d-1}{4}$ 
    holds,  
   where if $d=3$, we additionally require $q>2$.
  Define $\gamma$ via the scaling condition
      $  \frac1q+\frac{d}r   =\frac{d}2- \gamma.$
Then, we have
\begin{equation}\label{Strichartz-equ-1}
 \bigg\|\cos (t\sqrt{\mathcal{L}_a}) f + \frac{\sin (t\sqrt{\mathcal{L}_a)}}{\sqrt{\mathcal{L}_a}} g\bigg\|_{L^q_tL^r_x(\mathbb{R} \times \Rm^d)} \leq C \big(\|f\|_{\dot{H}^\gamma} + \|g\|_{\dot{H}^{\gamma-1}}\big)
\end{equation}
provided that
\[ -\min\left\{ 1,\sqrt{a +\tfrac94}-\tfrac12,\sqrt{a +\tfrac14}+1 \right\} <\gamma <\min\left\{
2,  \sqrt{a +\tfrac94}+\tfrac12,\sqrt{a+\tfrac14 }+1 -\tfrac1q \right\}  \] when  $d=3$;
\begin{align*}
 & -\min\left\{ \tfrac{d}{2} - \tfrac{d+3}{2(d-1)},\sqrt{a +\tfrac{d^2}4}-\tfrac{d+3}{2(d-1)},\sqrt{a +\tfrac{(d-2)^2}4}+1 \right\}\\
&  \qquad <\gamma  <\min\left\{
\tfrac{d+1}{2},  \sqrt{a +\tfrac{d^2}4}+\tfrac12,\sqrt{a+\tfrac{(d-2)^2}4 }+1 -\tfrac1q \right\}
\end{align*}  when $d\geq4$.
\end{proposition}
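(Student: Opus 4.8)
The plan is to reduce the estimate to a frequency-localized dispersive bound for the half-wave propagator and then invoke the abstract machinery of Keel--Tao. First I would use the functional calculus for $\mathcal{L}_a$ to write $\cos(t\sqrt{\mathcal{L}_a})$ and $\frac{\sin(t\sqrt{\mathcal{L}_a})}{\sqrt{\mathcal{L}_a}}$ in terms of $e^{\pm it\sqrt{\mathcal{L}_a}}$, so that it suffices to bound $e^{it\sqrt{\mathcal{L}_a}}\colon \dot H^\gamma \to L^q_tL^r_x$. The essential structural fact is that $\frac{a}{|x|^2}$ is scaling critical and rotation invariant, so $\mathcal{L}_a$ preserves the spherical-harmonic decomposition; on the sector of degree $\ell$ the radial part acts as a Bessel-type operator with the \emph{shifted} index
\[
 \nu_\ell = \sqrt{\Big(\ell + \tfrac{d-2}{2}\Big)^2 + a},
\]
in place of the free index $\ell + \tfrac{d-2}{2}$. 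This shift is precisely why the potential cannot be treated as a lower-order perturbation, and it is the heart of the matter.

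Second, I would establish frequency-localized estimates. Using the Mikhlin--H\"ormander functional calculus for $\mathcal{L}_a$ furnished by the heat-kernel bounds of \cite{KMVZZ-2018}, I decompose into Littlewood--Paley pieces $P_N$ adapted to $\mathcal{L}_a$ and prove, sector by sector, a dispersive bound of the form
\[
 \big\| e^{it\sqrt{\mathcal{L}_a}} P_N f \big\|_{L^\infty_x} \lesssim N^{d}\,(N|t|)^{-\frac{d-1}{2}}\,\|P_N f\|_{L^1_x}
\]
via the Hankel-transform representation of the propagator and the asymptotics of $J_{\nu_\ell}$. The summation over $\ell$ is exactly where the index shift enters: the large-$\nu_\ell$ contributions are harmless, while the lowest sectors $\ell=0,1$, with $\nu_0 = \sqrt{a + \tfrac{(d-2)^2}{4}}$ and $\nu_1 = \sqrt{a + \tfrac{d^2}{4}}$, control the admissible window. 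These are precisely the thresholds $\sqrt{a + \tfrac{(d-2)^2}{4}}+1$ and $\sqrt{a + \tfrac{d^2}{4}}+\tfrac12$ appearing in the constraints on $\gamma$, while the near-origin behaviour governed by $\sigma$ dictates the lower endpoints. For strongly attractive $a$ the unweighted $L^1\to L^\infty$ decay degrades, so in that regime I would localize the loss to these lowest sectors and use weighted or $L^{r'}\to L^r$ versions rather than the full dispersive estimate.

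Third, I would feed the frequency-localized energy and dispersive bounds into the Keel--Tao $TT^*$ argument \cite{KT-1998} to obtain the wave-admissible Strichartz estimate at each dyadic frequency, the scaling relation $\frac1q+\frac dr=\frac d2-\gamma$ fixing $\gamma$ and the admissibility $\frac1q+\frac{d-1}{2r}\le\frac{d-1}{4}$ arising in the usual way; the exclusion of $q=2$ when $d=3$ is the familiar wave-endpoint obstruction. Finally I would sum over $N$ using a square-function estimate adapted to $\mathcal{L}_a$, for which the equivalence of Sobolev spaces recalled above transfers the free Littlewood--Paley theory into the $\mathcal{L}_a$-framework. The main obstacle is the second step: because the potential is scaling critical it genuinely alters the dispersion, and controlling the sum over angular momenta with the shifted indices $\nu_\ell$ — pinning down the sharp range of $\gamma$ — is the delicate point. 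In practice this proposition is the combination of the estimates of Burq--Planchon--Stalker--Tahvildar-Zadeh \cite{BPST-2003} and Miao--Murphy--Zheng \cite{MMZ-2019}, whose conclusions I would import once the reduction above is in place.
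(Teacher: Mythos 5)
The paper does not prove this proposition at all: it is quoted as a black box from \cite{BPST-2003} and \cite{MMZ-2019}, so your closing move --- reducing to $e^{\pm it\sqrt{\mathcal{L}_a}}$ and importing the conclusions of those two papers --- is exactly what the authors do, and your identification of the shifted Bessel indices $\nu_\ell=\sqrt{(\ell+\tfrac{d-2}{2})^2+a}$ and of the lowest sectors $\ell=0,1$ as the source of the thresholds $\sqrt{a+\tfrac{(d-2)^2}{4}}+1$ and $\sqrt{a+\tfrac{d^2}{4}}\pm\tfrac12$ is faithful to the structure of the cited proofs.

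However, read as a self-contained argument, your second step contains a claim that is false precisely in the regime this paper focuses on ($a<0$). The unweighted frequency-localized dispersive bound $\|e^{it\sqrt{\mathcal{L}_a}}P_Nf\|_{L^\infty_x}\lesssim N^d(N|t|)^{-\frac{d-1}{2}}\|P_Nf\|_{L^1_x}$ cannot hold for any $a<0$: on the lowest spherical-harmonic sector the index $\nu_0=\sqrt{a+\tfrac{(d-2)^2}{4}}$ is strictly smaller than $\tfrac{d-2}{2}$, so the propagator kernel (indeed already the Littlewood--Paley projection $P_N$ adapted to $\mathcal{L}_a$, as seen from the heat-kernel bounds of \cite{KMVZZ-2018}) grows like $|x|^{-\sigma}$ with $\sigma=\tfrac{d-2}{2}-\nu_0>0$ near the origin, hence is unbounded in $L^\infty_x$. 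This is not a pathology confined to ``strongly attractive'' $a$ that can be quarantined as you suggest; it occurs for every negative $a$, which is exactly the case the paper analyzes (for $a>0$ it defers to \cite{shenhdradial}). Consequently the Keel--Tao machinery of \cite{KT-1998} cannot be fed unweighted $L^1\to L^\infty$ estimates at any stage; the weighted (Hankel-transform) estimates of \cite{BPST-2003} are not an optional fallback but the actual proof, and they are what produce the $a$-dependent windows on $\gamma$ stated in the proposition, together with the exponent ranges in the Sobolev-space equivalence needed to sum the dyadic pieces. So as a blind reconstruction your proposal would stall at step two; as a reduction-to-citation it lands in the same place as the paper.
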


\begin{remark}
We will also need  
 an inhomogeneous  version of \eqref{Strichartz-equ-1},
\begin{equation}\label{inh-Strichartz}
 \Big\|\int_0^t \frac{\sin ((t-s)\sqrt{\mathcal{L}_a)}}{\sqrt{\mathcal{L}_a}} F(s) ds \Big\|_{L_t^\infty([0,T]; \dot{H}^1\times L^2) \cap L_t^q L_x^r ([0,T]\times \Rm^d)} \leq C \|F\|_{L^1_t L^2_x([0,T]\times \Rm^d) },
\end{equation}
where parameters $(q,r,\gamma) = (q,r,1)$ satisfy conditions in Proposition \ref{Strichartz}.
In fact,   by Strichartz estimates  we have
$$ \Big\|\chi_{t\geq s}(t) \frac{\sin ((t-s)\sqrt{\mathcal{L}_a)}}{\sqrt{\mathcal{L}_a}} F(s)\Big\|_{L_t^\infty([0,T]; \dot{H}^1\times L^2) \cap L_t^q L_x^r ([0,T]\times \Rm^d)} \leq C \|F(s) \|_{L^2_x(\Rm^d)}.
$$
This  inequality implies that
$$\Big\|\int_0^T \chi_{t\geq s}(t) \frac{\sin ((t-s)\sqrt{\mathcal{L}_a)}}{\sqrt{\mathcal{L}_a}} F(s) ds \Big\|_{L_t^\infty([0,T]; \dot{H}^1\times L^2) \cap L_t^q L_x^r ([0,T]\times \Rm^d)} \leq C \|F\|_{L^1_t L^2_x([0,T]\times \Rm^d) },$$
which is exactly inequality \eqref{inh-Strichartz}.
\end{remark}
For the well-posedness theory for 
the problem \eqref{wave-La-p}, we need to use the Strichartz norm such as
$\|\cdot\|_{L_t^{\frac{2p}{(d-2)p-d} } L^{2p}_x} $.
 Based on the discussion in Remark \ref{rem-1.3}, for each $p\in[p_{conf},p_e)$ with $3\le d\le6$,   we have by \eqref{inh-Strichartz}
\begin{align}\nonumber
  & \Big\| \int_{0}^t  \frac{ \sin((t-s)\sqrt{\mathcal{L}_a} )}{\sqrt{\mathcal{L}_a}} \big( |u|^{p-1} u \big)ds \Big\|_{L_t^\infty(\dot H^1_x\times L^2_x) \cap L_t^{\frac{2p}{(d-2)p-d} } L^{2p}_x (I\times \mathbb{R}^d) } \\
\leq &  C \| |u|^{p-1} u \|_{L^1_tL^2_x(I\times \mathbb{R}^d)} \nonumber\\
\leq &C  T^{\frac{(d+2)-(d-2)p}{2}} \|u\|^p_{ L_t^{\frac{2p}{(d-2)p-d} } L^{2p}_x(I\times \mathbb{R}^d)},\label{nonlinear-1}
\end{align}
and
\begin{align}\nonumber
  & \Big\| \int_{0}^t  \frac{ \sin((t-s)\sqrt{\mathcal{L}_a} )}{\sqrt{\mathcal{L}_a}} \Big( |u|^{p-1} u - |v|^{p-1} v \Big)ds \Big\|_{L_t^\infty(\dot H^1_x\times L^2_x) \cap L_t^{\frac{2p}{(d-2)p-d} } L^{2p}_x (I\times \mathbb{R}^d) } \\
\leq & ~~C  T^{\frac{(d+2)-(d-2)p}{2}} (\|u\|^{p-1}_{ L_t^{\frac{2p}{(d-2)p-d} } L^{2p}_x(I\times \mathbb{R}^d)}+\|v\|^{p-1}_{ L_t^{\frac{2p}{(d-2)p-d} } L^{2p}_x(I\times \mathbb{R}^d)}) \|u-v\|_{ L_t^{\frac{2p}{(d-2)p-d} } L^{2p}_x(I\times \mathbb{R}^d)},\label{nonlinear-2}
\end{align}
where $I=[0,T]$ and
$$a>-\tfrac{(d-2)^2}4+\left(\tfrac{(d-2)p-d}{2p}\right)^2.$$
Then, by the Duhamel formula
\[
u= \cos(t \sqrt{\mathcal{L}_a} ) u_0 +\sin(t\sqrt{\mathcal{L}_a} )u_1 -
\int_{0}^t  \frac{ \sin((t-s)\sqrt{\mathcal{L}_a} )}{\sqrt{\mathcal{L}_a}} \left( |u|^{p-1} u \right)ds,
\]
one can show the local well-posedness of \eqref{wave-La-p} via the concentration map argument.
Moreover, by energy conservation, one can obtain the global  existence of the solution.
\begin{proposition}[Well-posedness theory]\label{well-p}
Assume that  $3\leq d \leq 6$ and $a>-\frac{(d-2)^2}4+\left(\frac{(d-2)p-d}{2p}\right)^2$.

 \begin{enumerate}
   \item[\rm(i)] (Existence)
For any initial data $(u_0,u_1)\in \dot H^1\times L^2(\mathbb{R}^d)$,
there are a time interval $I\owns0$ and
a unique  solution $u$ to  \eqref{wave-La-p} on $I$, such that
$(u,u_t)\in C(I,\dot H^1\times L^2(\mathbb{R}^d)) $ and
 $ u \in  L_{t}^{\frac{2p}{(d-2)p-d} } L^{2p}_x(I\times \mathbb{R}^d). $

\item[\rm(ii)] (Long time perturbation)
Let $I=[0,T]$ be a   time interval with $0<T<\infty$. For any positive $M<\infty$, there exists $\varepsilon_0=\varepsilon_0(T,M)$ such that if $0<\varepsilon<\varepsilon_0$, then the following holds:
If $\tilde u$ be a solution to $\partial_{tt} \tilde u+\mathcal{L}_a \tilde u =-|\tilde u|^{p-1}\tilde u$
and
\begin{align*}
& \|\tilde u(0),\partial_t \tilde u(0)\|_{\dot H^1\times L^2(\Rm^d)}<\infty ,\\
&|\tilde u\|_{L_{t}^{\frac{2p}{(d-2)p-d} } L^{2p}_x(I\times \Rm^d)} <M, \\
&\|(u_0,u_1)-(\tilde u(0),\partial_t \tilde u(0))\|_{\dot H^1\times L^2(\Rm^d)}<\varepsilon.   \end{align*}
Then there exists a solution $u$ to \eqref{wave-La-p} on $I$ such that
\begin{align*}
&   \|u-\tilde u\|_{L_{t}^{\frac{2p}{(d-2)p-d} } L^{2p}_x (I\times \Rm^d)} < C(T,M)\varepsilon, \\
& \sup_{t\in I}\|(u(t),\partial u(t))-(\tilde u(t),\partial_t \tilde u(t))\|_{\dot H^1\times L^2(\Rm^d)}< C(T,M)\varepsilon.\end{align*}
\item[\rm(iii)](energy conservation)
If  $E(u_0,u_1)<\infty$, then we have
$E(u(t),u_t(t))\equiv E(u_0,u_1),$ for $t\in I_{max}$.
 \end{enumerate}
 By {\rm(i)} and {\rm (iii)},  we can obtain the global
well-posedness of \eqref{wave-La-p}.
\end{proposition}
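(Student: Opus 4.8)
The plan is to establish (i)--(iii) in order; the global statement then follows by combining local existence with the conservation law. The entire argument is driven by the homogeneous Strichartz estimate \eqref{Strichartz-equ-1} at regularity $\gamma=1$, its inhomogeneous form \eqref{inh-Strichartz}, and the nonlinear estimates \eqref{nonlinear-1} and \eqref{nonlinear-2}, all valid under the standing hypotheses $3\le d\le 6$ and $a>-\frac{(d-2)^2}{4}+\big(\frac{(d-2)p-d}{2p}\big)^2$. A quick check confirms that the pair $(q,r)=\big(\frac{2p}{(d-2)p-d},2p\big)$ satisfies the scaling relation of Proposition \ref{Strichartz} exactly at $\gamma=1$, so the energy-level estimates apply.

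For part (i), I would run a contraction mapping in the space
\[
X_T = C\big([0,T];\dot H^1\times L^2\big)\cap L_t^{\frac{2p}{(d-2)p-d}}L_x^{2p}\big([0,T]\times\mathbb{R}^d\big),
\]
with the map $\Phi$ defined by the Duhamel formula, sending $u$ to the linear evolution of $(u_0,u_1)$ minus the Duhamel integral of $|u|^{p-1}u$. Estimating the linear part by \eqref{Strichartz-equ-1} and the Duhamel term by \eqref{nonlinear-1}, and bounding differences by \eqref{nonlinear-2}, the decisive feature is the positive power $T^{\frac{(d+2)-(d-2)p}{2}}$, positive precisely because $p<p_e$, which lets me choose $T$ small, depending only on $\|(u_0,u_1)\|_{\dot H^1\times L^2}$, so that $\Phi$ maps a closed ball of $X_T$ into itself and is a strict contraction there. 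The Banach fixed point theorem then produces the unique local solution.

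For part (ii), I would split $[0,T]$ into finitely many subintervals on each of which $\|\tilde u\|_{L_t^{\frac{2p}{(d-2)p-d}}L_x^{2p}}$ lies below a small threshold, the number of pieces being controlled by $M$. On each subinterval the difference $w=u-\tilde u$ solves the same equation with forcing $|u|^{p-1}u-|\tilde u|^{p-1}\tilde u$, so a short continuity/bootstrap argument based on \eqref{nonlinear-2} bounds $\|w\|_{X}$ by the size of the data discrepancy at the left endpoint. Iterating these bounds across the finitely many subintervals yields a constant $C(T,M)$ that remains finite because only finitely many steps occur, which gives both asserted estimates after choosing $\varepsilon_0$ small.

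Part (iii) is where I expect the real work. For smooth, spatially decaying solutions, pairing the equation with $u_t$ and integrating by parts gives $\frac{d}{dt}E(u,u_t)=0$. To reach general $\dot H^1\times L^2$ data I would approximate $(u_0,u_1)$ by smooth compactly supported data, use the stability furnished by (i)--(ii) to control the difference of the associated solutions in $X$, and pass to the limit. The integration by parts in the limit is legitimate precisely because $|u|^{p-1}u\in L^1_{loc}L^2$, the regularity flagged in Remark \ref{rem-1.3}; making this approximation rigorous, rather than the otherwise routine fixed-point bookkeeping, is the crux of the proposition. Global existence then follows: by the energy equivalence \eqref{basic-eq-2} the conserved energy controls $\|(u,u_t)\|_{\dot H^1\times L^2}$ uniformly in time, so the local time step from (i), which depends only on that norm, never shrinks to zero, and iterating the local construction extends the solution to all of $\mathbb{R}$.
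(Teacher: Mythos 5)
Your parts (i) and (ii) follow the paper's own route: the paper likewise deduces them from \eqref{nonlinear-1}--\eqref{nonlinear-2} together with a standard fixed point and long-time perturbation argument (citing \cite{LS-1985, Shen-2014}), and your verification that $(q,r,\gamma)=\big(\tfrac{2p}{(d-2)p-d},\,2p,\,1\big)$ is admissible exactly under the stated hypothesis on $a$ and $d$ is the content of Remark \ref{rem-1.3}. The closing global-in-time argument (conserved energy controls the $\dot H^1\times L^2$ norm, so the local existence time never shrinks) is also the paper's.

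The genuine gap is in (iii). Your plan rests on the implication ``smooth, compactly supported data $\Rightarrow$ smooth solution,'' which you need in order to pair the equation with $u_t$ and integrate by parts classically before passing to the limit. For $\mathcal{L}_a=-\Delta+a|x|^{-2}$ this propagation of classical regularity fails: the potential is singular at the origin, and once the wave reaches $x=0$ the solution generically develops the profile $|x|^{-\sigma}$ there, with $\sigma=\tfrac{d-2}{2}-\sqrt{\big(\tfrac{d-2}{2}\big)^2+a}$, which is unbounded when $a<0$ (the case the paper cares about) and merely H\"older when $a>0$; consistently, the equivalence of Sobolev spaces recorded in Section \ref{basic} is restricted to $s\in(0,2)$ with constraints involving $\sigma$, so no $H^2$-persistence statement is available to make your approximating solutions twice differentiable. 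Thus the step you yourself call ``the crux'' is not merely left unfinished---in the classical-smoothness form you state it, it is false, and the condition $|u|^{p-1}u\in L^1_{loc}L^2$ does not rescue a pointwise integration by parts against the singular potential. The paper's proof is designed precisely to dodge this: by (ii), Fatou's lemma and time-reversal invariance it reduces to data $(u_0,u_1)\in H^1\times L^2(\Rm^d)$, notes that then $(u,u_t)\in C(I;H^1\times L^2)$ with $|u|^{p-1}u\in L^2_tL^2_x$ on bounded intervals, and invokes the abstract energy-conservation theorem of Strauss (Theorem 4.1 of \cite{Strauss-1966}) with $V=H^1$, $H=W=L^2$, $A(t)=\mathcal{L}_a$, $Z=L^2((0,T);L^2)$, which requires only energy-class (quadratic form) regularity and never pointwise derivatives. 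Your sketch could be repaired in the same spirit---replace ``smooth'' by regularity adapted to $\mathcal{L}_a$, e.g.\ data in $D(\mathcal{L}_a)\times D(\mathcal{L}_a^{1/2})$, and run the energy identity through the form $Q$---but any correct justification must go through the operator $\mathcal{L}_a$ rather than through classical smoothness.
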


\begin{proof}
 The proof of {\rm (i)} and  {\rm (ii)} follows from the estimates
\eqref{nonlinear-1}-\eqref{nonlinear-2} and standard fixed point argument, see for example
\cite{LS-1985, Shen-2014}.
Moreover,
by $(ii)$ and 
Fatou's Lemma,
 time reverse invariance of \eqref{wave-La-p}, it suffices to consider the
initial data in $(u_0,u_1)\in H^1\times L^2(\Rm^d)$.
From $u_0\in L^2(\Rm^d)$  and $(u,\partial_t u)\in C(I,\dot H\times L^2(\Rm^d)),$ we have
 $(u,\partial_t u)\in C(I,  H^1\times L^2(\Rm^d)).$
Then,
  we have the energy conservation law, by Theorem 4.1 in Strauss\cite{Strauss-1966},  if we take $V= H^1(\Rm^d),H=W=L^2(\Rm^d), A(t)=\mathcal{L}_a$ and $Z=L^2((0,T);L^2(\Rm^d))$.

\end{proof}

\section{Energy Flux}\label{energyflux}
\setcounter{section}{3}\setcounter{equation}{0}

In this section, we consider the energy flux of the solutions given by Proposition \ref{well-p}.
 Given any region $\Sigma \subset \Rm^d$, we use the following notation to represent local energy for convenience.
 \begin{equation}\label{energy-flux-1}
  E(t;\Sigma) \doteq \int_{\Sigma} \Big(\frac{1}{2}|\nabla u(x,t)|^2 +\frac{a}{2} \frac{|u(x,t)|^2}{|x|^2}+\frac{1}{2}| u_t(x,t)|^2 + \frac{1}{p+1}|u(x,t)|^{p+1}\Big) dx.
\end{equation}
 We have the following energy flux formula for $t_2>t_1>\eta$
 \begin{align}\nonumber
  &E(t_2; B(0,t_2-\eta)) - E(t_1; B(0,t_1-\eta)) \\
 =& \frac{1}{\sqrt{2}} \int_{|x|=t-\eta, t_1<t<t_2} \Big(\frac{1}{2}|u_r+u_t|^2 +\frac{a}{2} \frac{|u|^2}{|x|^2}+\frac{1}{2}|\slashed{\nabla} u|^2 + \frac{1}{p+1}|u|^{p+1}\Big) dS.
 \label{energy-flux-2}\end{align}
Since  the local energy $E(t;\Sigma)$ remains uniformly bounded,  the energy flux above is always bounded by a constant multiple of energy. 
Letting $t_1\rightarrow \eta$ and $t_2 \rightarrow +\infty$, we obtain the energy flux through a whole light cone:
\begin{equation}\label{energy-flux-3}
 \int_{|x|=t-\eta} \left(\frac{1}{2}|u_r+u_t|^2 +\frac{a}{2} \frac{|u|^2}{|x|^2}+\frac{1}{2}|\slashed{\nabla} u|^2 + \frac{1}{p+1}|u|^{p+1}\right) dS \lesssim E.
\end{equation}
The limit process $t_1\rightarrow \eta$ and $t_2 \rightarrow +\infty$ is valid at least in the radial case although it involves the integral of the negative function $\frac{a}{2}\frac{|u|^2}{|x|^2}$. In fact,
\begin{itemize}
 \item \; The limit $t_1\rightarrow \eta$ can be done for a.e. $\eta \in \Rm$ by making use of
  $$  \int_{T_1}^{T_2} \int_{\Rm^d}  \Big(\frac{1}{2}|u_r+u_t|^2 +\frac{|a|}{2} \frac{|u|^2}{|x|^2}+\frac{1}{2}|\slashed{\nabla} u|^2 + \frac{1}{p+1}|u|^{p+1}\Big) dx dt \lesssim (T_2-T_1) E < +\infty.
 $$
  \item\; In the radial case, we have $u(x,t) \lesssim |x|^{-\frac{2(d-1)}{p+3}}$. Thus the integral $\int_{|x|=t-\eta>1} \frac{|u|^2}{|x|^2} dS$ is always finite.
   This fact tell us that the limit $t_2\rightarrow \infty$ can be done.
\end{itemize}
Next we have a key observation
\begin{equation}\label{energy-flux-4}
 \int_{|x|=t-\eta} |u_r+u_t|^2 dS \geq \Big(\frac{ d-2 }{2} \Big)^2 \int_{|x|=t-\eta} \frac{|u|^2}{|x|^2} dS.
\end{equation}
In fact,  it suffices to  apply the sharp Hardy inequality on  $\tilde{u}(x) = u(x, |x|+\eta) \in \dot{H}^1 (\Rm^d)$.
\begin{proposition}[Energy flux] \label{energy flux}
 Let $u$ be a finite-energy radial solution to \eqref{wave-La-p}. Then we have the energy flux formula
 \begin{align}
  &E(t_2; B(0,t_2-\eta)) - E(t_1; B(0,t_1-\eta))\nonumber \\
 =& \frac{1}{\sqrt{2}} \int_{|x|=t-\eta, t_1<t<t_2} \Big(\frac{1}{2}|u_r+u_t|^2 +\frac{a}{2} \frac{|u|^2}{|x|^2}+ \frac{1}{p+1}|u|^{p+1}\Big) dS.
 \label{energy-flux-5}\end{align}
 In addition, the full energy
 flux through the light cone
 satisfies
 \begin{equation}\label{energy-flux-6}
  \int_{|x|=t-\eta} \Big(|u_r+u_t|^2 + \frac{|u|^2}{|x|^2}+ |u|^{p+1}\Big) dS \lesssim E.
\end{equation}
\end{proposition}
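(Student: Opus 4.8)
The plan is to obtain the flux identity from a local energy conservation law and then to upgrade it to the coercive bound \eqref{energy-flux-6} using the cone Hardy inequality \eqref{energy-flux-4}. First I would record the pointwise conservation law. Setting
\[
  e(u) \doteq \frac{1}{2}|\nabla u|^2 + \frac{a}{2}\frac{|u|^2}{|x|^2} + \frac{1}{2}|u_t|^2 + \frac{1}{p+1}|u|^{p+1},
\]
one checks directly from \eqref{wave-La-p} that the potential and nonlinear contributions to $\partial_t e$ cancel against the corresponding terms generated by $u_t\,u_{tt}$, leaving the identity $\partial_t e = \nabla\cdot(u_t\nabla u)$. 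Integrating this over the truncated cone (frustum) $\{t_1<t<t_2,\ |x|<t-\eta\}$ and applying the divergence theorem in space-time, the two flat caps produce $E(t_2;B(0,t_2-\eta))$ and $-E(t_1;B(0,t_1-\eta))$, while the lateral boundary $\{|x|=t-\eta\}$, whose outward unit space-time normal is $\tfrac{1}{\sqrt2}(\tfrac{x}{|x|},-1)$, produces the flux term with the prefactor $\tfrac1{\sqrt2}$. Recombining $\tfrac12 u_r^2+\tfrac12 u_t^2+u_r u_t=\tfrac12|u_r+u_t|^2$ recovers \eqref{energy-flux-2}; since $u$ is radial, $\slashed{\nabla}u\equiv0$ and the angular term $\tfrac12|\slashed{\nabla}u|^2$ disappears, which is exactly \eqref{energy-flux-5}.

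For the bound \eqref{energy-flux-6} I would first pass to the full cone. As explained in the discussion preceding the statement, the space-time integrability estimate together with the radial decay of Lemma \ref{pointwise estimate 2} (which makes $\int_{|x|=t-\eta}\tfrac{|u|^2}{|x|^2}\,dS$ finite) legitimizes the limits $t_1\to\eta$ and $t_2\to\infty$, yielding \eqref{energy-flux-3} with the angular term already absent by radiality. For $a\ge0$ every term in \eqref{energy-flux-3} is nonnegative and the conclusion is immediate (invoking \eqref{energy-flux-4} to control $\int\tfrac{|u|^2}{|x|^2}\,dS$ when $a=0$), so the substantive case is $a<0$, where the potential contribution $\tfrac a2\tfrac{|u|^2}{|x|^2}$ is negative and \eqref{energy-flux-3} does not directly bound $\int\tfrac{|u|^2}{|x|^2}\,dS$. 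To remedy this I would split $\tfrac12|u_r+u_t|^2=\tfrac\theta2|u_r+u_t|^2+\tfrac{1-\theta}2|u_r+u_t|^2$ and apply \eqref{energy-flux-4} to the second piece, bounding it below by $\tfrac{1-\theta}2\big(\tfrac{d-2}2\big)^2\int\tfrac{|u|^2}{|x|^2}\,dS$. Choosing $\theta\in(0,1)$ with $(1-\theta)\big(\tfrac{d-2}2\big)^2>|a|$—possible precisely because $a>-\tfrac{(d-2)^2}4$—the combination $\tfrac{1-\theta}2\big(\tfrac{d-2}2\big)^2+\tfrac a2$ is a strictly positive constant. Thus the integrand in \eqref{energy-flux-3} dominates a fixed positive multiple of $|u_r+u_t|^2+\tfrac{|u|^2}{|x|^2}+|u|^{p+1}$, and \eqref{energy-flux-6} follows.

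The main obstacle is the indefinite sign of the inverse-square term when $a<0$: neither the flux integrand nor its limit over the whole cone is manifestly nonnegative, so both the convergence of the improper integrals and the final coercivity require genuine input rather than monotonicity. Both are supplied by the strict inequality $|a|<\big(\tfrac{d-2}2\big)^2$ (equivalently $a>-\tfrac{(d-2)^2}4$) combined with the sharp Hardy inequality \eqref{basic-eq-1} transplanted to the cone via $\tilde u(x)=u(x,|x|+\eta)$, whose radial derivative equals $u_r+u_t$. Once these are in place, the cancellation in the conservation law and the divergence theorem on the frustum are routine, and the only care needed is the $\tfrac1{\sqrt2}$ normalization coming from the geometry of the light cone.
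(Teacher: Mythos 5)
Your proof is correct and follows essentially the same route as the paper: the flux identity via the divergence theorem on the truncated cone, the limits $t_1\to\eta$ and $t_2\to+\infty$ justified by the space-time energy bound and the radial decay of Lemma \ref{pointwise estimate 2}, and the key cone Hardy inequality \eqref{energy-flux-4} obtained by applying the sharp Hardy inequality to $\tilde{u}(x)=u(x,|x|+\eta)$. Your $\theta$-splitting to extract coercivity when $a<0$ is exactly the step the paper compresses into ``it immediately follows.''
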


\section{Morawetz estimates}\label{Morawetz-sec}
\setcounter{section}{4}\setcounter{equation}{0}

In this section, 
 we shall prove a class of  local  Morawetz estimates for solutions in Proposition \ref{well-p}. Please note that these Morawetz estimates hold for non-radial solutions as well.
For this purpose, we give a Hardy inequality on a local domain.
\begin{lemma}\label{cutoff energy}
 Let $u \in \dot{H}^1(\Rm^d)$. Then for any $R>0$, we have
 \begin{equation}\label{morawetz-eq-1}
  \int_{|x|<R} \Big(|u_r (x)|^2 + a\tfrac{|u(x)|^2}{|x|^2}\Big) dx =\int_{|x|< R}\Big |u_r + \tfrac{\sigma}{|x|}u(x)\Big|^2 dx  -\sigma \int_{|x|=R}  \tfrac{|u(x)|^2}{|x|} dS(x),
 \end{equation}
where
$$\sigma   =\sigma_a  \doteq \tfrac{d-2}{2}-\sqrt{\tfrac{(d-2)^2}{4}+a}.$$
 Moreover,   we have
 \begin{equation}\label{morawetz-eq-2}
  \int_{|x|<R} \Big(|u_r (x)|^2 + \frac{|u(x)|^2}{|x|^2}\Big) dx \lesssim_{d,a}  \int_{|x|< R}\Big |u_r (x)+ \tfrac{\sigma}{|x|}u(x)\Big|^2 dx + \int_{|x|=R}  \frac{|u(x)|^2}{|x|} dS(x).
  \end{equation}
\end{lemma}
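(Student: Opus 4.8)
The plan is to derive both parts from a single completing-the-square identity. For a parameter $\beta\in\Rm$ I would expand the nonnegative quantity $\int_{|x|<R}\big|\nabla u+\beta\tfrac{x}{|x|^2}u\big|^2\,dx$, using $\mathrm{Re}(\bar u\,x\cdot\nabla u)=\tfrac12 x\cdot\nabla|u|^2$ for the cross term. Since $\nabla\cdot\tfrac{x}{|x|^2}=\tfrac{d-2}{|x|^2}$ and the outward normal on $|x|=R$ is $x/|x|$, integration by parts gives $\int_{|x|<R}\tfrac{x\cdot\nabla|u|^2}{|x|^2}\,dx=\int_{|x|=R}\tfrac{|u|^2}{|x|}\,dS-(d-2)\int_{|x|<R}\tfrac{|u|^2}{|x|^2}\,dx$. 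This yields the exact identity, valid for every $\beta$,
\begin{equation*}
\int_{|x|<R}\Big|\nabla u+\beta\tfrac{x}{|x|^2}u\Big|^2 dx=\int_{|x|<R}\Big(|\nabla u|^2+(\beta^2-(d-2)\beta)\tfrac{|u|^2}{|x|^2}\Big)dx+\beta\int_{|x|=R}\tfrac{|u|^2}{|x|}dS. \tag{$\star$}
\end{equation*}

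For the nonnegativity \eqref{morawetz-eq-1} I would take $\beta=\sigma$, the smaller root of $\beta^2-(d-2)\beta=a$ (the roots $\tfrac{d-2}{2}\pm\sqrt{\tfrac{(d-2)^2}{4}+a}$ are real precisely because $a>-\tfrac{(d-2)^2}{4}$). With this choice the coefficient of $\tfrac{|u|^2}{|x|^2}$ is $a$ and the boundary coefficient is $\sigma$, so the right-hand side of $(\star)$ is exactly $f(R)$, while the left-hand side is a square; hence $f(R)\geq0$. For \eqref{morawetz-eq-2} I would instead apply $(\star)$ with $\beta=\tfrac{d-2}{2}$, where $\beta^2-(d-2)\beta=-\tfrac{(d-2)^2}{4}$ is minimal; dropping the nonnegative left-hand side gives
\[
\tfrac{(d-2)^2}{4}\int_{|x|<R}\tfrac{|u|^2}{|x|^2}\,dx\leq\int_{|x|<R}|\nabla u|^2\,dx+\tfrac{d-2}{2}\int_{|x|=R}\tfrac{|u|^2}{|x|}\,dS.
\]

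Substituting the definition of $f(R)$, namely $\int_{|x|<R}|\nabla u|^2=f(R)-a\int_{|x|<R}\tfrac{|u|^2}{|x|^2}-\sigma\int_{|x|=R}\tfrac{|u|^2}{|x|}dS$, and collecting the $\int_{|x|<R}\tfrac{|u|^2}{|x|^2}$ terms, I would obtain
\[
\Big(\tfrac{(d-2)^2}{4}+a\Big)\int_{|x|<R}\tfrac{|u|^2}{|x|^2}\,dx\leq f(R)+\Big(\tfrac{d-2}{2}-\sigma\Big)\int_{|x|=R}\tfrac{|u|^2}{|x|}\,dS.
\]
The decisive point is that the coefficient $\tfrac{(d-2)^2}{4}+a>0$ is exactly the standing hypothesis on $a$, so dividing controls $\int_{|x|<R}\tfrac{|u|^2}{|x|^2}$ by $f(R)+\int_{|x|=R}\tfrac{|u|^2}{|x|}dS$; feeding this back into the formula for $\int_{|x|<R}|\nabla u|^2$ and bounding $|a|,|\sigma|$ by constants depending only on $d$ and $a$ closes \eqref{morawetz-eq-2}.

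The one genuinely delicate point is justifying the integration by parts, i.e.\ that no contribution survives at the origin. I would first establish $(\star)$ for $u\in C_c^\infty(\Rm^d\setminus\{0\})$, where the annular integration by parts has no inner-boundary term, and then pass to general $u\in\dot H^1(\Rm^d)$ by density (for $d\geq3$ a point has zero capacity, so $C_c^\infty(\Rm^d\setminus\{0\})$ is dense in $\dot H^1$). The passage to the limit is legitimate because each term is continuous under $\dot H^1$-approximation: the weighted integral $\int_{|x|<R}\tfrac{|u|^2}{|x|^2}$ by the sharp Hardy inequality \eqref{basic-eq-1}, and the spherical trace $\int_{|x|=R}\tfrac{|u|^2}{|x|}dS$ by continuity of the trace map into $L^2(\{|x|=R\})$ on a neighbourhood of the sphere. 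This approximation step, rather than the elementary algebra, is where the real care is needed.
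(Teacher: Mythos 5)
Your proof is correct and takes essentially the same approach as the paper: both parts rest on the completing-the-square identity $f(R)=\int_{|x|<R}\big|\nabla u+\sigma\tfrac{x}{|x|^2}u\big|^2dx$ obtained by integration by parts, and the second estimate \eqref{morawetz-eq-2} is in both cases extracted from the nonnegativity of this same quadratic form at a shifted parameter — you take the endpoint $\beta=\tfrac{d-2}{2}$ (i.e.\ replace $a$ by $-\tfrac{(d-2)^2}{4}$), while the paper replaces $a$ by $a-\varepsilon$ for small $\varepsilon>0$, which is only a cosmetic difference. Your explicit justification of the integration by parts via density of $C_c^\infty(\Rm^d\setminus\{0\})$ in $\dot H^1$ is a detail the paper leaves implicit, but the underlying argument is identical.
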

\begin{proof}
Identity \eqref{morawetz-eq-1} follows the fact $a=\sigma^2-(d-2)\sigma$ and integration by parts. In addition, we may choose $a_1 = a -\varepsilon>-\frac{d-2}{2}$ with a small constant $\varepsilon>0$, define the corresponding $\sigma_1 > \sigma$ and obtain
\begin{align}
 \int_{|x|<R} \Big(|\nabla u(x)|^2 + a_1 \tfrac{|u(x)|^2}{|x|^2}\Big) dx & =\int_{|x|< R}\Big |\nabla u(x)+ \tfrac{\sigma_1}{|x|}u(x)\Big|^2 dx  -\sigma_1 \int_{|x|=R}  \tfrac{|u(x)|^2}{|x|} dS(x)\nonumber\\
 & \geq -\sigma_1 \int_{|x|=R}  \tfrac{|u(x)|^2}{|x|} dS(x). \label{morawetz-eq-11}
\end{align}
 Taking the difference of \eqref{morawetz-eq-1} and \eqref{morawetz-eq-11}, we have
\begin{align*}
 \int_{|x|<R} \varepsilon \tfrac{|u(x)|^2}{|x|^2} dx \leq \int_{|x|< R}\Big |\nabla u(x)+ \tfrac{\sigma}{|x|}u(x)\Big|^2 dx  +(\sigma_1-\sigma) \int_{|x|=R}  \tfrac{|u(x)|^2}{|x|} dS(x).
\end{align*}
Finally we may combine this with \eqref{morawetz-eq-1} again to finish the proof of \eqref{morawetz-eq-2}.
\end{proof}


\begin{proposition}[Morawetz estimates]\label{morawetz-prop-0}
 Let $u$ be a finite-energy solution to \eqref{wave-La-p}. Then for any time $T_1<T_2$ and radius $R>0$ we have
 \begin{align}\nonumber
   &\frac1{2R} \int_{T_1}^{T_2} \int_{|x|< R}
  \Big(|\nabla u|^2 +a \frac{|u|^2}{|x|^2}+| u_t|^2  + \frac{(d-1)(p-1)-2}{p+1} |u|^{p+1} \Big) dxdt  \\
 & + \frac{d-1}{4R^2}\int_{T_1}^{T_2} \int_{|x|=R} |u|^2dS(x)dt  +   \int_{T_1}^{T_2}\int_{|x|> R} \Big(\frac{(d-1)(p-1)}{2(p+1)}  \frac{|u|^{p+1}}{|x|}+  (a+\lambda_d) \frac{|u|^2}{|x|^3}\Big) dxdt \nonumber\\
 & + \int_{|x|<R} \left[J_u^+(x,T_2)+J_u^-(x,T_1)\right] dx \leq 2E + \big(|a| -\lambda_d\big) \int_{|x|>R} \frac{|u(x,T_1)|^2+|u(x,T_2)|^2}{2|x|^2} dx.
 \label{Morawetz}
 \end{align}
 The notation $J_u^\pm (x,t)$ represents
 \[
  J_u^\pm  = \frac12\left[\frac{R^2-|x|^2}{R^2} \Big |u_r + \tfrac{\sigma}{|x|}u\Big|^2 + (\mu_d+a-2\sigma)\frac{|u|^2}{R^2} + \left| \frac{|x|}{R} u_r + \frac{(d-1)u}{2R}\pm u_t\right|^2 + |\slashed{\nabla} u|^2\right].
 \]
 Here $\mu_d=\frac{d^2-1}{4}$, $\lambda_d=\frac{(d-1)(d-3)}{4}$ and $\sigma= \frac{d-2}{2}-\sqrt{\left(\tfrac{d-2}{2} \right)^2+a}$ are all constants. Our assumption on $d$ and $a$ guarantees that $\mu_d+a-2\sigma \geq 3/4$.
\end{proposition}
\begin{proof}[Proof]\; We will calculate as though the solution is sufficiently smooth, otherwise we may apply smooth approximation techniques. Let
 \begin{align}
M(t)= \int_{|x|\leq R} u_t \Big( x\cdot \nabla u + \frac{d-1}{2} u\Big) dx + R\int_{|x|\geq R} u_t \Big(\frac{x}{|x|} \cdot\nabla u +  \frac{d-1}{2} \frac{u}{|x|}\Big) dx.
\end{align}
A straightforward calculation shows that the sum of double integrals in the left hand side of \eqref{Morawetz}, 
i.e.,
\begin{equation}
  \hbox{sum of first two lines of \eqref{Morawetz}} = -\frac{1}{R} \int_{T_1}^{T_2} M'(t) dt = \frac{1}{R}(M(T_1)-M(T_2)). \label{expression of left}
\end{equation}
Next we find upper bounds of $-M(T_2)/R$ and $M(T_1)/R$. By Cauchy-Schwarz and integration by parts, we have
\begin{align}
 -2M(T_2)/R& = -2\int_{|x|\leq R} u_t \Big( \frac{|x|}{R} u_r + \frac{d-1}{2} \frac{u}{R}\Big) dx -2 \int_{|x|\geq R} u_t \Big(u_r +  \frac{d-1}{2} \frac{u}{|x|}\Big) dx
 \nonumber\\
&    \leq   \int_{|x| \leq R} \left(\frac{|x|}{R} u_r + \frac{d-1}{2} \frac{u}{R}\right)^2 dx +   \int_{|x| > R} \left( u_r +  \frac{d-1}{2} \frac{u}{|x|}\right)^2 dx
 \nonumber\\
 & \qquad +  \int_{\Rm^d} |u_t|^2 dx -  \int_{|x|<R}  \left|u_t + \frac{|x|}{R} u_r + \frac{d-1}{2}\cdot \frac{u}{R}\right|^2 dx \nonumber \\
& = \frac{1}{R^2}\Big[\int_{|x|\leq R} \Big(  |x|^2 |u_r|^2  -\mu_d |u|^2\Big)dx + R^2 \int_{|x|\geq R}  \Big( |u_r|^2  -\lambda_d \frac{|u|^2}{|x|^2} \Big)dx\Big]\nonumber\\
 & \qquad +  \int_{\Rm^d} |u_t|^2 dx -  \int_{|x|<R}  \left|u_t + \frac{|x|}{R} u_r + \frac{d-1}{2}\cdot \frac{u}{R}\right|^2 dx.\label{upper Mt}
 \end{align}
We denote the sum enclosed in the middle bracket in \eqref{upper Mt} by $I$. Then we have
\begin{align*}
I =  & \int_{|x| \leq R} \Big( |x|^2 |u_r|^2  + a|u|^2\Big)dx -(\mu_d+a) \int_{|x|\leq R} |u|^2 dx\\
 & \qquad  + R^2 \int_{|x|\geq R} \Big(|\nabla u|^2 + a\frac{|u|^2}{|x|^2}\Big) dx -(\lambda_d+a)  R^2 \int_{|x|>R} \frac{|u|^2}{|x|^2} dx\\
 = & R^2 \int_{\Rm^d} \Big(|u_r|^2 + a\frac{|u|^2}{|x|^2}\Big) dx  - \int_0^R 2r \left[\int_{|x|<r} \Big(|u_r|^2 + a\frac{|u|^2}{|x|^2}\Big) dx\right] dr \\
 & \qquad -(\mu_d+a) \int_{|x|\leq R} |u|^2 dx -(\lambda_d+a) R^2 \int_{|x|>R} \frac{|u|^2}{|x|^2} dx.
\end{align*}
We obtain by Lemma \ref{cutoff energy}
\begin{align*}
 I & = R^2 \int_{\Rm^d} \Big(|u_r|^2 + a\frac{|u|^2}{|x|^2}\Big) dx - \int_{|x|<R} (R^2-|x|^2) \Big |u_r + \tfrac{\sigma}{|x|}u\Big|^2 dx\\
 &\qquad -(\mu_d+a-2\sigma) \int_{|x|\leq R} |u|^2 dx -(\lambda_d+a) R^2 \int_{|x|>R} \frac{|u|^2}{|x|^2} dx.
\end{align*}
Inserting this identity in \eqref{upper Mt} and recalling
\[
 E = \int_{\Rm^d} \left(\frac{1}{2} |u_r|^2 + \frac{1}{2}|\slashed{\nabla} u|^2 + \frac{a}{2} \cdot \frac{|u|^2}{|x|^2} + \frac{1}{2} |u_t|^2  + \frac{1}{p+1}|u|^{p+1}\right) dx,
\]
we have
\begin{align*}
 \frac{-M(T_2)}{R}  \leq E - \int_{|x|<R} J_u^+(x,T_2) dx  - \Big(\lambda_d+a\Big) \int_{|x|>R} \frac{|u(x,T_2)|^2}{2|x|^2} dx.
\end{align*}
Similarly we have
\begin{align*}
 \frac{M(T_1)}{R}  \leq E - \int_{|x|<R} J_u^-(x,T_1) dx  - \Big(\lambda_d+a\Big) \int_{|x|>R} \frac{|u(x,T_1)|^2}{2|x|^2} dx.
\end{align*}
Combining these upper bounds with \eqref{expression of left},
 we finish the proof of Proposition \ref{morawetz-prop-0}.
\end{proof}
\begin{corollary} \label{retard fast energy}
 Let $u$ be a finite-energy solution to \eqref{wave-La-p}. Then for any $R>0$ we have
 \begin{align}
 \int_{|x|<R} J_u^+(x,R) dx \lesssim \frac{1}{R}\int_{-R}^R \int_{|x|> R} \Big(|\nabla u|^2 + | u_t|^2 + |u|^{p+1}\Big) dxdt + \sum_{\pm} \int_{|x|>R} \frac{|u(x,\pm R)|^2}{|x|^2} dx. \label{morawetz-eq-6}
 \end{align}
\end{corollary}
\begin{proof}
 We apply Morawetz estimate \eqref{Morawetz} with $T_1 = -R$ and $T_2 = R$. Our assumption $p\geq 1+\frac{4}{d-1}$ means that $\frac{(d-1)(p-1)-2}{2(p+1)} \geq \frac{1}{p+1}$.
 Thus we may ignore all other positive terms in the left hand side of Morawetz estimate \eqref{Morawetz} and obtain
 \begin{align*}
  \frac{1}{R}\int_{ -R}^R \int_{|x|< R} & \Big(\frac{1}{2}|\nabla u|^2 +\frac{a}{2} \frac{|u|^2}{|x|^2}+\frac{1}{2}|u_t|^2 + \frac{1}{p+1}|u|^{p+1}\Big) dxdt   + \int_{|x|<R}J_u^+(x,R) dx\\
   & \leq 2E +(|a|-\lambda_d)\left[\int_{-R}^R \int_{|x|>R} \frac{|u|^2}{|x|^3} dxdt + \sum_{\pm} \int_{|x|>R} \frac{|u(x,\pm R)|^2}{2|x|^2} dx\right].
 \end{align*}
Thanks to the energy conservation law, we may substitute $2E$ in \eqref{Morawetz} by
 \begin{align*}
   \frac{1}{R} \int_{-R}^R \int_{\mathbb R^d}\Big(\frac{1}{2}|\nabla u|^2 +\frac{a}{2} \frac{|u|^2}{|x|^2}+\frac{1}{2}| u_t|^2 + \frac{1}{p+1}|u|^{p+1}\Big) dx dt,
 \end{align*}
 then subtract the first term in the left hand side from both sides, and finally obtain
 \begin{align*}
   \int_{|x|<R} J_u^+(x,R) dx &\leq \frac{1}{R}\int_{-R}^R \int_{|x|> R} \Big(\frac{1}{2}|\nabla u|^2 +\frac{a}{2} \frac{|u|^2}{|x|^2}+\frac{1}{2}| u_t|^2 + \frac{1}{p+1}|u|^{p+1}\Big) dxdt\nonumber\\
  & \quad  +(|a|-\lambda_d)\left[\int_{-R}^R \int_{|x|>R} \frac{|u|^2}{|x|^3} dxdt + \sum_{\pm} \int_{|x|>R} \frac{|u(x,\pm R)|^2}{2|x|^2} dx  \right]\\
  & \lesssim \frac{1}{R}\int_{-R}^R \int_{|x|> R} \Big(|\nabla u|^2 + | u_t|^2 + |u|^{p+1}\Big) dxdt + \sum_{\pm} \int_{|x|>R} \frac{|u(x,\pm R)|^2}{|x|^2} dx.
 \end{align*}
 Here we apply Hardy's inequality in the exterior region
 \[
  \int_{|x|>R} \frac{|u|^2}{|x|^3} dxdt \leq \frac{1}{R} \int_{|x|>R} \frac{|u|^2}{|x|^2} dx \lesssim \frac{1}{R} \int_{|x|>R} |\nabla u|^2 dx.
 \]
  This completes the proof of  Corollary \ref{retard fast energy}.
\end{proof}

\section{Method of Characteristic Lines}\label{mtheod-characteristic}
\setcounter{section}{5}\setcounter{equation}{0}

In this section, we prove Proposition  \ref{main 1} by method of characteristic lines.
  First we state the existence and uniqueness of a radiation
field for a finite energy solution to the linear wave equation.

\begin{proposition}[Radiation  field, \cite{DKM-2019,Friedlander-1962,Friedlander-1980,shenhdradial}]\label{radiation}
For any radial free wave solution $v$ with finite energy, there exists a
unique function $g_+\in L^2(\Rm)$ such that
\begin{equation}\label{characteristic-eq-1}
  \lim_{t\to \infty} \int_0^\infty \Big( \Big| r^{\frac{d-1}{2}} u_r(r,t)+g_+(t-r) \Big|^2 +  \Big| r^{\frac{d-1}{2}} u_t(r,t)-g_+(t-r) \Big|^2  \Big)dr=0.
\end{equation}
  On the other hand, for $g_+\in L^2(\Rm)$, there exists a
unique   radial free wave solution $u$ with finite energy 
 such that the equality \eqref{characteristic-eq-1}  
  holds.
\end{proposition}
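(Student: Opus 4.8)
The plan is to reduce the $d$-dimensional radial problem to a one-dimensional wave equation and then read off the radiation field from the resulting transport structure. Writing $w(r,t)=r^{(d-1)/2}u(r,t)$, a direct computation shows that a radial solution of $\partial_t^2 u-\Delta u=0$ corresponds to a solution of
\begin{equation*}
 w_{tt}-w_{rr}+\frac{\lambda_d}{r^2}\,w=0,\qquad r>0,
\end{equation*}
where $\lambda_d=(d-1)(d-3)/4$. Moreover, after an integration by parts the conserved energy of $u$ satisfies $E\simeq\int_0^\infty\big(|w_r|^2+|w_t|^2+\tfrac{\lambda_d}{r^2}|w|^2\big)\,dr$, and in the target quantities the terms $r^{(d-1)/2}u_r$ and $r^{(d-1)/2}u_t$ differ from $w_r$ and $w_t$ only by $\tfrac{d-1}{2}r^{(d-3)/2}u=O(r^{-1}w)$, which is controlled by the Hardy-type bound and becomes negligible on the light cone as $t\to\infty$. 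Thus it suffices to prove that $w_t+w_r\to 0$ and $w_t-w_r\to 2g_+(t-r)$ in $L^2(dr)$ for a suitable $g_+$.

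First I would establish the statement for the dense class of data that are smooth and compactly supported in $(0,\infty)$. For such data, finite speed of propagation confines the solution to the annulus $|\,|x|-t\,|\lesssim 1$ for large $t$, where $r\approx t$ is large and the potential $\lambda_d/r^2$ is a small, time-integrable perturbation of the flat one-dimensional equation $w_{tt}-w_{rr}=0$. Comparing $w$ with the free one-dimensional evolution (whose outgoing profile $F(t-r)$, with $g_+:=F'$, is explicit via d'Alembert's formula), a Duhamel/energy estimate shows the difference of the two evolutions is $o(1)$ in the energy norm, and in particular $w_t+w_r\to 0$ while $w_t-w_r$ converges to an outgoing profile $2g_+(t-r)\in L^2$. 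A parallel computation gives $\|g_+\|_{L^2(\Rm)}\simeq\|(u_0,u_1)\|_{\HL}$ (local energy decay guarantees that, as $t\to+\infty$, all of the energy migrates into the outgoing channel), so the map $(u_0,u_1)\mapsto g_+$ is bounded above and below on the radial subspace of $\HL$. Passing to the limit over the dense class then extends both the existence of $g_+$ and the identity \eqref{characteristic-eq-1} to all finite-energy radial data.

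Uniqueness of $g_+$ is immediate: if $g_+,\tilde g_+$ both satisfy \eqref{characteristic-eq-1}, then $\int_{-\infty}^{t}|g_+(s)-\tilde g_+(s)|^2\,ds=\int_0^\infty|g_+(t-r)-\tilde g_+(t-r)|^2\,dr\to0$, forcing $g_+=\tilde g_+$ a.e. For the converse direction I would use that the map has dense range: profiles $g_+$ arising from the dense class above are dense in $L^2(\Rm)$, and since $\HL$ is complete and the map is bounded below, its range is closed, hence all of $L^2(\Rm)$. Thus every $g_+\in L^2(\Rm)$ is realized by a unique finite-energy radial free wave. Alternatively, this entire step may be imported directly from the radiation field theory of Friedlander \cite{Friedlander-1962,Friedlander-1980} and its radial formulation in \cite{DKM-2019,shenhdradial}.

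The main obstacle is the potential term $\lambda_d/r^2$ in the reduced equation, which is singular at the origin and vanishes only when $d=3$. The point is to show it does not disturb the radiation field in the limit: near $r=0$ one must invoke local energy decay to see that the energy there vanishes as $t\to\infty$, while in the far region $r\approx t$ one must quantify that $\lambda_d/r^2\approx\lambda_d/t^2$ is integrable in $t$ along the flow so that the comparison with the flat one-dimensional equation closes. Treating both regions simultaneously, rather than the clean $d=3$ case, is where the real work lies.
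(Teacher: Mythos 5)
The first thing to say is that the paper contains no proof of Proposition \ref{radiation} to compare against: it is imported wholesale from \cite{DKM-2019,Friedlander-1962,Friedlander-1980,shenhdradial} (``we import radiation field theorem of the free wave equation''), so your sketch must stand on its own. Its skeleton is the classical one and much of it is sound: the reduction $w=r^{(d-1)/2}u$ with $w_{tt}-w_{rr}+\lambda_d r^{-2}w=0$, the energy identity $\int_0^\infty\big(|w_r|^2+|w_t|^2+\lambda_d r^{-2}|w|^2\big)\,dr\simeq E$, the observation that $r^{(d-1)/2}u_r-w_r=O(r^{-1}w)$, and the uniqueness argument via $\int_{-\infty}^{t}|g_+-\tilde g_+|^2\,ds\to 0$ are all correct.

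There are, however, genuine gaps in the two hard steps. First, ``finite speed of propagation confines the solution to the annulus $||x|-t|\lesssim 1$'' is not finite propagation speed but the strong Huygens principle, which holds only in \emph{odd} dimensions; the paper invokes the proposition for $3\le d\le 6$, so $d=4,6$ must be covered, and there the solution has an interior tail that your integrability-along-the-flow argument never controls. Second, and more seriously, the potential is not perturbative near the origin even for the dense class: the source term is $\lambda_d r^{-2}w=\lambda_d r^{(d-5)/2}u$, and for $d=4$ one has $\int_0^1|\lambda_4 r^{-1/2}u|^2\,dr=\lambda_4^2\int_0^1 r^{-1}|u(r,t)|^2\,dr=+\infty$ whenever $u(0,t)\neq0$, so the Duhamel/energy comparison with the flat one-dimensional equation cannot even be set up on all of $(0,\infty)$. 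The standard repair is to first discard the region $\{r<(1-\delta)t\}$ using the interior energy decay of free waves (precisely \eqref{free limit 2} of Proposition \ref{decay-out-cone}) and run the comparison only near the cone, where $r^{-2}=O(t^{-2})$; you gesture at exactly this in your closing paragraph (``this is where the real work lies''), but deferring it means the proposal is an outline with the only genuinely difficult step missing. Finally, the surjectivity direction rests on the unproved assertion that the radiation profiles of your dense class are dense in $L^2(\Rm)$; the bounded-below estimate does give a closed range, but density of the range is essentially equivalent to what is being proved and needs an independent argument (an explicit construction of outgoing waves, or Friedlander's theory) --- at which point one is simply citing \cite{Friedlander-1962,Friedlander-1980,DKM-2019,shenhdradial}, exactly as the paper does.
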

Next, we recall the decay property of free waves.
\begin{proposition}[See Lemma 2.13 in \cite{shenhdradial}]\label{decay-out-cone}
Assume $d\geq3$. Let $u$ be a solution  to the free wave equation
$\big(\partial_t^2-\Delta\big) v=0$ with initial data $\dot H^1\times L^2(\Rm^d).$ Then we have
\begin{equation}
\lim_{\eta\to\infty} \sup_{t>\eta}
\Big\{ \int_{|x|<t-\eta}  \Big(|\nabla v(x,t)|^2+|v_t(x,t)|^2\Big)dx \Big\} =0,
\label{free limit 2}
\end{equation}
\vspace{-0.2cm}
\begin{equation}
  \lim_{R \rightarrow \infty} \sup_{t>0} \Big\{\int_{|x|>t+R} \Big(|\nabla v(x,t)|^2 + |v_t(x,t)|^2\Big) dx \Big\} = 0, \label{free limit 3}
\end{equation}
\end{proposition}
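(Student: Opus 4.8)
The plan is to prove the two limits separately, each from a monotonicity property of a truncated free energy, supplemented in the interior case by the radiation field of Proposition \ref{radiation}. Throughout I write $e(x,t)=\tfrac12(|\nabla v|^2+|v_t|^2)$ for the energy density and use the local conservation law $\partial_t e=\nabla\cdot(v_t\nabla v)$, which follows from $v_{tt}=\Delta v$. Differentiating a truncated energy in time then produces, via Reynolds' transport theorem and the divergence theorem, a boundary flux on a sphere moving outward at unit speed; whether that flux has a favourable sign is exactly what distinguishes the interior from the exterior region. A routine density/approximation step reduces both identities to solutions smooth enough to justify these manipulations.

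For \eqref{free limit 3} I would introduce the exterior energy $\mathcal{E}_{\mathrm{ext}}(s)=\int_{|x|>s+R}e(x,s)\,dx$. Since the boundary sphere $|x|=s+R$ moves outward at unit speed and its outward normal for the exterior domain is $-x/|x|$, the transport term and the divergence-theorem flux combine into
\[
\frac{d}{ds}\mathcal{E}_{\mathrm{ext}}(s)=-\int_{|x|=s+R}\Big(\tfrac12|v_t+v_r|^2+\tfrac12|\slashed{\nabla}v|^2\Big)\,dS\le 0 ,
\]
so $\mathcal{E}_{\mathrm{ext}}$ is non-increasing. Hence for every $t>0$ one has $\int_{|x|>t+R}e(x,t)\,dx\le \mathcal{E}_{\mathrm{ext}}(0)=\tfrac12\int_{|x|>R}\big(|\nabla v_0|^2+|v_1|^2\big)\,dx$, whose right-hand side is independent of $t$ and tends to $0$ as $R\to\infty$ because $(v_0,v_1)\in\dot H^1\times L^2$. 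This yields \eqref{free limit 3} with the required uniformity in $t$, and notably needs no radiation field.

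For \eqref{free limit 2} I would run the same computation on the interior energy $\mathcal{E}_{\mathrm{int}}(s)=\int_{|x|<s-\eta}e(x,s)\,dx$ with $s>\eta$. Now the flux terms add with the opposite sign, giving
\[
\frac{d}{ds}\mathcal{E}_{\mathrm{int}}(s)=\int_{|x|=s-\eta}\Big(\tfrac12|v_t+v_r|^2+\tfrac12|\slashed{\nabla}v|^2\Big)\,dS\ge 0 ,
\]
so $\mathcal{E}_{\mathrm{int}}$ is non-decreasing and therefore $\sup_{t>\eta}\mathcal{E}_{\mathrm{int}}(t)=\lim_{t\to\infty}\mathcal{E}_{\mathrm{int}}(t)$. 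Passing to radial variables, $\mathcal{E}_{\mathrm{int}}(t)=\tfrac{c_d}{2}\int_0^{t-\eta}\big(|r^{(d-1)/2}v_r|^2+|r^{(d-1)/2}v_t|^2\big)\,dr$, and Proposition \ref{radiation} lets me replace $r^{(d-1)/2}v_r$ by $-g_+(t-r)$ and $r^{(d-1)/2}v_t$ by $g_+(t-r)$ up to an $L^2_r$ error vanishing as $t\to\infty$. After the substitution $\tau=t-r$ this gives $\lim_{t\to\infty}\mathcal{E}_{\mathrm{int}}(t)=c_d\int_\eta^\infty|g_+(\tau)|^2\,d\tau$, which tends to $0$ as $\eta\to\infty$ since $g_+\in L^2(\Rm)$. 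Combining with the monotonicity yields $\lim_{\eta\to\infty}\sup_{t>\eta}\mathcal{E}_{\mathrm{int}}(t)=0$, i.e.\ \eqref{free limit 2}.

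I expect the main obstacle to be the interior estimate, and specifically the passage to the limit in $t$: one must upgrade the $L^2_r$ convergence of the radiation field on the full half-line to convergence of the truncated integral over $(0,t-\eta)$ while tracking the moving endpoint. This is handled by the triangle inequality in $L^2_r$, bounding $\|r^{(d-1)/2}v_r\|_{L^2(0,t-\eta)}$ by $\|g_+\|_{L^2(\eta,t)}$ plus the vanishing global radiation-field error, and then letting $t\to\infty$ and using $\|g_+\|_{L^2(\eta,t)}\to\|g_+\|_{L^2(\eta,\infty)}$. The exterior estimate, by contrast, is essentially immediate once the monotonicity of $\mathcal{E}_{\mathrm{ext}}$ is established.
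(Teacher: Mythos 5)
Your proposal addresses a statement that the paper itself never proves --- Proposition \ref{decay-out-cone} is imported from \cite{shenhdradial} with only a citation --- so there is no internal proof to compare against, and your argument must be judged on its own terms. The exterior estimate \eqref{free limit 3} is handled correctly: the monotonicity
$\frac{d}{ds}\mathcal{E}_{\mathrm{ext}}(s)=-\int_{|x|=s+R}\big(\tfrac12|v_t+v_r|^2+\tfrac12|\slashed{\nabla}v|^2\big)\,dS\le 0$
is the standard exterior energy-flux identity, it is valid for arbitrary (not necessarily radial) finite-energy free waves once the density step you mention is carried out, and the bound $\sup_{t>0}\mathcal{E}_{\mathrm{ext}}(t)\le\int_{|x|>R}e(x,0)\,dx\to 0$ gives exactly the required uniformity.

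The interior estimate \eqref{free limit 2}, however, contains a genuine gap: as written, your argument proves the claim only for \emph{radial} solutions, while the proposition is stated for arbitrary data in $\dot H^1\times L^2(\Rm^d)$. Two steps silently use radial symmetry. First, the identity $\mathcal{E}_{\mathrm{int}}(t)=\tfrac{c_d}{2}\int_0^{t-\eta}\big(|r^{(d-1)/2}v_r|^2+|r^{(d-1)/2}v_t|^2\big)\,dr$ discards the angular energy $\tfrac12|\slashed{\nabla}v|^2$, which for general data is part of the energy density inside the cone and must itself be shown to vanish there. Second, Proposition \ref{radiation}, which you invoke to produce $g_+$, is stated in the paper only for radial free waves. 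The repair requires the non-radial Friedlander radiation field (see \cite{Friedlander-1962,Friedlander-1980,DKM-2019}, the very references cited for Proposition \ref{radiation}): there exists $G_+\in L^2(\Rm\times S^{d-1})$ with
\[
\lim_{t\to\infty}\int_0^\infty\int_{S^{d-1}}\Big(\big|r^{\frac{d-1}{2}}\partial_r v+G_+\big|^2+\big|r^{\frac{d-1}{2}}\partial_t v-G_+\big|^2+\big|r^{\frac{d-1}{2}}\slashed{\nabla}v\big|^2\Big)\,d\omega\,dr=0,
\]
and in particular the angular component vanishes asymptotically; with this, your monotonicity-plus-moving-endpoint computation goes through verbatim with $\|g_+\|_{L^2(\eta,\infty)}$ replaced by $\|G_+\|_{L^2((\eta,\infty)\times S^{d-1})}$. (Alternatively one may reduce by density to smooth compactly supported data and use pointwise decay, or strong Huygens in odd dimensions.) The rest of your interior argument is sound: the monotonicity $\frac{d}{ds}\mathcal{E}_{\mathrm{int}}(s)\ge 0$ holds without any symmetry assumption, and your treatment of the moving endpoint via the triangle inequality in $L^2_r$ is correct.
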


Assume that $u$ is a finite-energy, radial solution to \eqref{wave-La-p}. Let $w = r^{\frac{d-1}{2}}u(r,t)$,
then, we have
\begin{equation}\label{characteristic-eq-2}
(\partial_t^2 -\partial_r^2)w=- r^{\frac{d-1}{2}} |u|^{p-1} u-(\lambda_d+a) r^{\frac{d-5}{2} } u.
\end{equation}

As in \cite{shenenergy,shenhdradial}, by the characteristic method, radial Sobolev  and the integral estimate \eqref{energy-flux-6}, we have the following.
\begin{proposition} \label{variation of w}
 For $\tau+1 < t_1 < t_2 < s-1$, we have
\begin{align*}
|(w_t+w_r)(s-t_2,t_2)-(w_t+w_r)(s-t_1,t_1)| &\lesssim (s-t_2)^{-\beta(d,p)},\\
|(w_t-w_r)( t_2-\tau,t_2)-(w_t-w_r)(t_1-\tau,t_1)| &\lesssim (t_1-\tau)^{-\beta(d,p)},
\end{align*}
where $\beta(d,p)= \frac{(d-1)(p-1)-2}{2(p+1)}\in (0,\frac12)$  by the assumption of $p$.
\end{proposition}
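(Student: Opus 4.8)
The plan is to integrate the one–dimensional wave equation \eqref{characteristic-eq-2} along the two null families and to estimate the resulting integral of the source term $F := - r^{\frac{d-1}{2}}|u|^{p-1}u - (\lambda_d+a)\, r^{\frac{d-5}{2}}u$. Writing $L_\pm = \partial_t \pm \partial_r$, equation \eqref{characteristic-eq-2} reads $L_-L_+ w = L_+L_- w = F$, so $w_t+w_r$ is transported along the incoming lines $t+r=\text{const}$ and $w_t-w_r$ along the outgoing lines $t-r=\text{const}$. Along the incoming line $t+r=s$ one computes $\frac{d}{dt}\big[(w_t+w_r)(s-t,t)\big] = (L_-L_+ w)(s-t,t) = F(s-t,t)$, whence
\[ (w_t+w_r)(s-t_2,t_2) - (w_t+w_r)(s-t_1,t_1) = \int_{t_1}^{t_2} F(s-t,t)\,dt, \]
and the outgoing estimate is identical with $w_t-w_r$ in place of $w_t+w_r$. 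Thus (after the usual smooth approximation) the proposition reduces to $\int_{t_1}^{t_2}|F|\,dt \lesssim r_{\min}^{-\beta(d,p)}$, where $r_{\min}$ is the smallest radius met on the segment, namely $s-t_2$ in the first case and $t_1-\tau$ in the second; the hypotheses $t_2<s-1$ and $\tau+1<t_1$ guarantee $r_{\min}>1$.

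For the nonlinear part $r^{\frac{d-1}{2}}|u|^p$ I would change variables from $t$ to the radius $r$ (with $|dt|=dr$) and invoke the energy flux rather than mere pointwise decay, since the pointwise bound alone degenerates to a logarithm at the conformal endpoint. For the outgoing family the relevant surface is exactly $\{|x|=t-\tau\}$, so Proposition \ref{energy flux} yields $\int r^{d-1}|u(r,r+\tau)|^{p+1}\,dr \lesssim E$; for the incoming family the same bound on the backward cone $\{|x|=s-t\}$ follows by applying that proposition to the time–reversed solution. Splitting $r^{\frac{d-1}{2}}|u|^p = \big(r^{d-1}|u|^{p+1}\big)^{\frac{p}{p+1}}\, r^{-\frac{(d-1)(p-1)}{2(p+1)}}$ and applying Hölder with exponents $\tfrac{p+1}{p}$ and $p+1$, the flux controls the first factor and leaves $\big(\int_{r_{\min}}^\infty r^{-\frac{(d-1)(p-1)}{2}}\,dr\big)^{\frac{1}{p+1}}$. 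Since $\frac{(d-1)(p-1)}{2}\ge 2>1$, with equality at $p=p_{conf}$, this integral converges and equals a multiple of $r_{\min}^{\frac{1}{p+1}\left(1-\frac{(d-1)(p-1)}{2}\right)} = r_{\min}^{-\beta(d,p)}$, precisely the claimed rate.

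For the remaining term $(\lambda_d+a)\,r^{\frac{d-5}{2}}|u|$ I would use the refined radial pointwise estimate \eqref{basic-eq-4} of Lemma \ref{pointwise estimate 2}, $|u(r)|\lesssim r^{-\frac{2(d-1)}{p+3}}$, which applies since $r\ge r_{\min}>1$. This bounds the integrand by $r^{\frac{d-5}{2}-\frac{2(d-1)}{p+3}}$, an exponent which is at most $-\tfrac32$ (it equals $-\tfrac32$ at $p=p_e$ and is smaller for $p<p_e$); integrating over $[r_{\min},\infty)$ gives a bound $\lesssim r_{\min}^{\frac{d-3}{2}-\frac{2(d-1)}{p+3}} \le r_{\min}^{-1/2} \le r_{\min}^{-\beta(d,p)}$, using $r_{\min}>1$ and $\beta(d,p)<\tfrac12$. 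Only $|\lambda_d+a|$ enters here, so the sign of $a$ is irrelevant and the case $a<0$ is handled exactly as in the flat setting.

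The step that must be arranged carefully is the Hölder balance in the nonlinear estimate: the split of $r^{\frac{d-1}{2}}|u|^p$ has to put the flux quantity at the power $\tfrac{p}{p+1}$ while leaving a residual weight of exponent $-\tfrac{(d-1)(p-1)}{2}<-1$, and it is exactly the conformal threshold $p\ge p_{conf}=1+\tfrac{4}{d-1}$ that renders this residual integrable and produces the sharp rate $\beta$. The only genuinely geometric subtlety is that the incoming estimate is not literally covered by the forward flux of Proposition \ref{energy flux}; I would supply the needed flux on the backward light cone by applying that proposition to $u(x,-t)$, which is again a finite–energy radial solution of \eqref{wave-La-p}.
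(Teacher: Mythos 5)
Your proposal is correct and follows essentially the same route as the paper: integration along characteristics, with the nonlinear term $r^{\frac{d-1}{2}}|u|^p$ estimated by H\"older against the energy flux of Proposition \ref{energy flux} (the method the paper imports from \cite{shenenergy,shenhdradial}, including the time-reversal trick for the backward cone), and the potential term $(\lambda_d+a)r^{\frac{d-5}{2}}u$ handled by pointwise radial decay. The only cosmetic difference is that you invoke the refined bound \eqref{basic-eq-4} for the potential term, whereas the paper observes that the cruder bound $|u(r)|\lesssim r^{-(d-2)/2}$ already yields the integrable rate $r^{-3/2}$; both give the needed $r_{\min}^{-1/2}\le r_{\min}^{-\beta(d,p)}$.
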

\begin{proof}
 These two inequalities can be proved in the same way. Here we only stretch the proof of the first one. By \eqref{characteristic-eq-2} we have
\[
 \frac{\partial}{\partial t} (w_t+w_r)(s-t,t) = -(s-t)^{\frac{d-1}{2}} |u|^{p-1} u(s-t,t) - (\lambda_d+a) (s-t)^{\frac{d-5}{2}} u(s-t,t).
\]
Thus
\begin{align*}
 |(w_t+w_r)&(s-t_2,t_2)-(w_t+w_r)(s-t_1,t_1)|\\
  & \lesssim \int_{t_1}^{t_2} \left[(s-t)^{\frac{d-1}{2}} |u(s-t,t)|^{p} + (s-t)^{\frac{d-5}{2}} |u(s-t,t)|\right] dt.
\end{align*}
We may apply H\"{o}lder inequality and the integral estimate \eqref{energy-flux-6} to obtain
\begin{align*}
 \int_{t_1}^{t_2} & (s-t)^{\frac{d-1}{2}} |u(s-t,t)|^{p} dt\\
  & \lesssim \left(\int_{t_1}^{t_2} (s-t)^{-\frac{(d-1)(p-1)}{2}}dt\right)^{\frac{1}{p+1}} \left(\int_{t_1}^{t_2} (s-t)^{d-1} |u(s-t,t)|^{p+1} dt\right)^{\frac{p}{p+1}}\\
& \lesssim (s-t_2)^{-\beta(d,p)}.
\end{align*}
In addition, we may use the inequality $|u(r,t)| \lesssim r^{-(d-2)/2}$ and obtain
\[
 \int_{t_1}^{t_2} (s-t)^{\frac{d-5}{2}} |u(s-t,t)| dt \lesssim (s-t_2)^{-1/2}.
\]
Combining these two estimates, we complete the proof.
\end{proof}

As a  consequence of Proposition 5.3, we  immediately have the following.

\begin{proposition}
There exist $g_\pm\in L^2(\Rm)$ satisfying that
\begin{align*}
 &\lim_{t \rightarrow +\infty} (w_t-w_r)(t-\eta,t) = 2g_+(\eta),& &\|g_+\|_{L^2(\Rm)} \lesssim  E/c_d,&\\
 &\lim_{t \rightarrow -\infty} (w_t+w_r)(s-t,t) = 2g_-(s),& &\|g_-\|_{L^2(\Rm)} \lesssim   E/c_d.&
\end{align*}
\end{proposition}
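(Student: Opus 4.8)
The plan is to produce $g_\pm$ as pointwise limits along characteristics using Proposition \ref{variation of w}, and then to obtain the $L^2$ bounds from energy conservation together with a Fatou argument. First I would establish the limit defining $g_+$. Fix $\eta\in\Rm$ and look at $t\mapsto (w_t-w_r)(t-\eta,t)$ along the outgoing characteristic $\{r=t-\eta\}$. By the second estimate of Proposition \ref{variation of w} (with $\tau=\eta$), for $\eta+1<t_1<t_2$ one has a bound of size $(t_1-\eta)^{-\beta(d,p)}$, which tends to $0$ as $t_1\to+\infty$ since $\beta(d,p)>0$. Hence $(w_t-w_r)(t-\eta,t)$ is Cauchy as $t\to+\infty$, and $2g_+(\eta)\doteq\lim_{t\to+\infty}(w_t-w_r)(t-\eta,t)$ exists for every $\eta$; being a pointwise limit of continuous functions, $g_+$ is measurable. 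The function $g_-$ is produced the same way from the first estimate of Proposition \ref{variation of w}: along the incoming characteristic $\{r=s-t\}$ the quantity $(w_t+w_r)(s-t,t)$ is Cauchy as $t\to-\infty$ because the error $(s-t_2)^{-\beta(d,p)}\to0$, which defines $2g_-(s)$.

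Next I would prove the $L^2$ bound. On the time slice $t=T$ set $\phi_T(\eta)\doteq\frac12(w_t-w_r)(T-\eta,T)$, defined for $\eta<T$; by the previous step $\phi_T(\eta)\to g_+(\eta)$ pointwise as $T\to+\infty$. Using $w=r^{(d-1)/2}u$, so that $w_t-w_r=r^{(d-1)/2}(u_t-u_r)-\frac{d-1}{2}r^{(d-3)/2}u$, the change of variables $\eta=T-r$ gives
\[
\int_{\Rm}|\phi_T(\eta)|^2\,d\eta=\frac14\int_0^\infty|(w_t-w_r)(r,T)|^2\,dr\lesssim\int_0^\infty r^{d-1}\big(|u_t|^2+|u_r|^2\big)\,dr+\int_0^\infty r^{d-3}|u|^2\,dr.
\]
Passing back to $\Rm^d$ and invoking the rough energy identity \eqref{basic-eq-2} with energy conservation (Proposition \ref{well-p}(iii)), the right-hand side is $\lesssim E/c_d$ uniformly in $T$. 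Fatou's lemma then yields $\int_{\Rm}|g_+(\eta)|^2\,d\eta\le\liminf_{T\to+\infty}\int_{\Rm}|\phi_T(\eta)|^2\,d\eta\lesssim E/c_d$, which is the asserted bound on $g_+$; the bound on $g_-$ follows identically by time reversal.

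The Cauchy and Fatou steps are routine; the point that needs care is recognizing where the uniform control comes from. The radiation quantity $w_t-w_r$ corresponds to the combination $u_t-u_r$, which is transverse to the forward light cone, so its uniform $L^2(dr)$ bound cannot be read from the cone energy flux \eqref{energy-flux-6} (that estimate controls only the tangential combination $u_r+u_t$). Instead one must use the slice energy $\int_{\Rm^d}\big(|u_t|^2+|\nabla u|^2+|u|^2/|x|^2\big)\,dx\lesssim E$, which is exactly the content of the Hardy-type identity \eqref{basic-eq-2} together with energy conservation. One should also verify that $\{\eta<T\}$ exhausts $\Rm$ as $T\to+\infty$ and that $g_\pm$ are measurable, both of which are immediate from the construction.
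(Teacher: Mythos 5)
Your proof is correct and follows exactly the route the paper intends: the paper states this proposition without proof as a direct consequence of Proposition \ref{variation of w} (the difference along a fixed characteristic is $O\big(t^{-\beta(d,p)}\big)$, hence Cauchy), and, as in the cited works of Shen, the $L^2$ bound comes from the uniform slice estimate $\int_0^\infty |(w_t\mp w_r)(r,T)|^2\,dr\lesssim E/c_d$ — which uses precisely your observation that one needs the Hardy-type identity \eqref{basic-eq-2} plus energy conservation rather than the cone flux \eqref{energy-flux-6} — followed by Fatou's lemma. The only discrepancy is cosmetic: what you actually prove is $\|g_\pm\|_{L^2(\Rm)}^2\lesssim E/c_d$, which is the dimensionally natural form of the bound that the proposition states (somewhat loosely) as $\|g_\pm\|_{L^2(\Rm)}\lesssim E/c_d$.
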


\noindent
\textbf{Proof of Theorem \ref{main 1}}\; As a consequence of Proposition \ref{variation of w}, one can verify that
\begin{equation}\label{characteristic-eq-3}
 \limsup_{t\rightarrow +\infty} \int_{t-c\cdot t^{1-\kappa_0}}^{t+R} (|(w_t+w_r)(r,t)-2g_-(r+t)|^2 + |(w_t-w_r)(r,t)-2g_+(t-r)|^2) dr \lesssim c.
\end{equation}
where  $1-\kappa_0=2\beta(d,p)  $,  $c, R$ are positive constants.
A simple calculation shows
$$\int_{t-c\cdot t^{1-\kappa_0}}^{t+R} |g_-(r+t)|^2 dr \rightarrow 0,\quad\;\text{\rm as}\quad\; t\to\infty.$$
  Therefore, we have
$$  \limsup_{t\rightarrow +\infty} \int_{t-c\cdot t^{1-\kappa_0}}^{t+R} (|(w_t+w_r)(r,t)|^2 + |(w_t-w_r)(r,t)-2g_+(t-r)|^2) dr \lesssim c. $$
This can be rewritten as
 $$ \limsup_{t\rightarrow +\infty} \int_{t-c\cdot t^{1-\kappa_0}}^{t+R} (|w_t(r,t)-g_+(t-r)|^2 + |w_r(r,t)+g_+(t-r)|^2) dr \lesssim c. $$
Note that $w_r = r^{\frac{d-1}{2}} u_r + {\frac{d-1}{2}} r^{\frac{d-3}{2}}u$ and
$$ \lim_{t\rightarrow +\infty} \int_{t-c\cdot t^{1-\kappa_0}}^{t+R} r^{d-3 }|u(r,t)|^2 dr \lesssim \lim_{t\rightarrow +\infty}\int_{t-c\cdot t^{1-\kappa_0}}^{t+R}  r^{d-3 } r^{-\frac{4(d-1)}{p+3}}dr = 0,
$$
  by Lemma \ref{pointwise estimate 2}.
 Thus, we have
\begin{align}
  & \limsup_{t\rightarrow +\infty} \int_{t-c\cdot t^{1-\kappa_0}}^{t+R} (|r^{\frac{d-1}{2}} u_t(r,t)-g_+(t-r)|^2 + |r^{\frac{d-1}{2}} u_r(r,t)+g_+(t-r)|^2) dr \lesssim c .\label{u middle 2}
\end{align}
Similarly we have for $\eta, R>0$
\begin{equation} \label{u middle 1}
 \limsup_{t\rightarrow +\infty} \int_{t-\eta}^{t+R} (|r^{\frac{d-1}{2}} u_t(r,t)-g_+(t-r)|^2 + |r^{ \frac{d-1}{2}} u_r(r,t)+g_+(t-r)|^2) dr = 0.
\end{equation}

Next, by Proposition \ref{radiation}, there exists free wave $v$ such that
%
\begin{align}
 & \lim_{t\rightarrow +\infty} \int_{0}^{\infty} (|r^{ \frac{d-1}{2}} v_t(r,t)-g_+(t-r)|^2 + |r^{ \frac{d-1}{2}} v_r(r,t)+g_+(t-r)|^2) dr = 0. \label{free limit 1}
\end{align}
Combining \eqref{free limit 1} with \eqref{u middle 1} or \eqref{u middle 2}, we have
\begin{align}
 & \lim_{t \rightarrow +\infty} \int_{t-\eta<|x|<t+R} \Big(|\nabla u(x,t) - \nabla v(x,t)|^2 + |u_t(x,t)-v_t(x,t)|^2\Big) dx = 0, \label{difference middle 1}\\
 & \limsup_{t \rightarrow +\infty} \int_{t-c\cdot t^{1-\kappa_0}<|x|<t+R} \Big(|\nabla u(x,t) - \nabla v(x,t)|^2 + |u_t(x,t)-v_t(x,t)|^2\Big) dx \lesssim c. \label{difference middle 2}
\end{align}
Finally, 
to complete the proof of Theorem \ref{main 1}, it suffices  to show
\begin{equation}
 \lim_{R \rightarrow +\infty} \sup_{t>0} \Big\{\int_{|x|>t+R} (|\nabla u(x,t)-\nabla v(x,t)|^2 + |u_t(x,t)-v_t(x,t)|^2) dx \Big\} = 0. \label{difference outside}
\end{equation}
In fact, this is a direct consequence of
the finite propagation speed.
One can also see  \eqref{free limit 3}, or employ   Lemma \ref{decay of tail} with $\kappa=0$ if the decay of energy is unknown.
%
%
%

\section{Further estimates with energy decay and the proof of Theorem \ref{main 2}  }\label{further}
\setcounter{section}{6}\setcounter{equation}{0}

In this section, we consider the scattering theory of
\eqref{wave-La-p} under the condition of
 additional energy decay \eqref{weight-energy}.

\begin{lemma} \label{decay of tail}
Let $\kappa \in [0,1)$. Assume that initial data $(u_0,u_1)\in \dot{H}^1 \times L^2$ satisfy
 \[
  E_\kappa (u_0,u_1) = \int_{\Rm^d} (|x|^\kappa + 1)\left(|\nabla u_0(x)|^2 + |u_1(x)|^2 + |u_0(x)|^{p+1}\right) dx < +\infty.
 \]
 Then the corresponding solution $u$ to \eqref{wave-La-p} satisfies for each $r>0$
 \begin{align}\label{basicfurther-eq-0}
  \int_{|x|>r+|t|} \Big(|\nabla u(x,t)|^2 + \frac{|u(x,t)|^2}{|x|^2} + |u_t(x,t)|^2 + |u(x,t)|^{p+1}\Big)dx \lesssim r^{-\kappa} E_{\kappa,r}(u_0,u_1),
 \end{align}
 where  $E_{\kappa,r}(u_0,u_1)$ is defined by
 \[
  E_{\kappa,r}(u_0,u_1) \doteq \int_{|x|>r/2} (|x|^\kappa + 1)\left(|\nabla u_0(x)|^2 + |u_1(x)|^2 + |u_0(x)|^{p+1}\right) dx \rightarrow 0, \; \hbox{as}\; r \rightarrow +\infty.
 \]
 If the initial data is also radial, we have
 \begin{align}
  |u(x,t)|\lesssim |x|^{-\frac{2(d-1)}{p+3}}(|x|-|t|)^{-\frac{2\kappa}{p+3}} E_{\kappa,|x|-|t|}(u_0,u_1)^{\frac{2}{p+3}},\;\;|x|>|t|.\label{basicfurther-eq-1}
 \end{align}
\end{lemma}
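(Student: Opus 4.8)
The plan is to prove the exterior energy bound \eqref{basicfurther-eq-0} first, by localizing the initial data and invoking finite speed of propagation, and then to deduce the radial pointwise bound \eqref{basicfurther-eq-1} from \eqref{basicfurther-eq-0} by the same radial Sobolev argument that underlies Lemma \ref{pointwise estimate 2}.

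For \eqref{basicfurther-eq-0}, fix $r>0$ and a time $t$, and choose a smooth radial cutoff $\chi$ with $\chi\equiv 1$ on $\{|x|\ge 3r/4\}$, $\chi\equiv 0$ on $\{|x|\le r/2\}$ and $|\nabla\chi|\lesssim r^{-1}$. Let $\tilde u$ be the solution of \eqref{wave-La-p} furnished by Proposition \ref{well-p} with truncated data $(\chi u_0,\chi u_1)$. The key point is that the backward light cone from any point $(x,t)$ with $|x|>r+|t|$ stays in $\{|y|>r\}$ at all intermediate times: it therefore avoids the origin, where the potential $a/|x|^2$ is singular, and its base at $s=0$ lies in $\{\chi\equiv1\}$, where the two sets of data agree. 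Hence finite speed of propagation gives $u=\tilde u$ on $\{|x|>r+|t|\}$. Using the energy equivalence \eqref{basic-eq-2} for $\tilde u$ together with energy conservation (Proposition \ref{well-p}(iii)),
\[
\int_{|x|>r+|t|}\Big(|\nabla u|^2+\tfrac{|u|^2}{|x|^2}+|u_t|^2+|u|^{p+1}\Big)dx=\int_{|x|>r+|t|}\Big(|\nabla\tilde u|^2+\cdots\Big)dx\lesssim E(\tilde u_0,\tilde u_1).
\]
Since $(\chi u_0,\chi u_1)$ is supported in $\{|x|>r/2\}$, Hardy's inequality \eqref{basic-eq-1} (which absorbs the potential term and the commutator $|\nabla\chi|\,|u_0|$, supported on the annulus $\{r/2<|x|<3r/4\}$ where $|\nabla\chi|\simeq|x|^{-1}$) yields $E(\tilde u_0,\tilde u_1)\lesssim\int_{|x|>r/2}(|\nabla u_0|^2+|u_1|^2+|u_0|^{p+1})dx$. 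Finally, on $\{|x|>r/2\}$ one has $1\le 2^\kappa r^{-\kappa}|x|^\kappa$, which converts this last integral into $\lesssim r^{-\kappa}E_{\kappa,r}$; this proves \eqref{basicfurther-eq-0}. The assertion $E_{\kappa,r}\to0$ as $r\to\infty$ is just absolute continuity of the integral, since $E_\kappa(u_0,u_1)<\infty$.

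For the radial bound \eqref{basicfurther-eq-1}, fix $x$ with $R:=|x|>|t|$ and set $\rho:=|x|-|t|>0$. Writing $u(r,t)$ for the radial profile and integrating $\partial_r|u|^{(p+3)/2}$ from $R$ to $\infty$,
\[
|u(R,t)|^{\frac{p+3}{2}}\lesssim\int_R^\infty|u|^{\frac{p+1}{2}}|\partial_r u|\,dr\le\Big(\int_R^\infty|\partial_r u|^2 r^{d-1}dr\Big)^{\frac12}\Big(\int_R^\infty|u|^{p+1}r^{-(d-1)}dr\Big)^{\frac12},
\]
exactly as in the proof of Lemma \ref{pointwise estimate 2}, but with the integrals restricted to $r>R$. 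The first factor is $\simeq\int_{|y|>R}|\nabla u|^2dy$, while in the second the elementary bound $r^{-(d-1)}\le R^{-2(d-1)}r^{d-1}$ for $r\ge R$ produces $R^{-2(d-1)}\int_{|y|>R}|u|^{p+1}dy$. Applying \eqref{basicfurther-eq-0} with the choice $r=\rho$, so that $\{|y|>r+|t|\}=\{|y|>R\}$, controls both exterior integrals by $\rho^{-\kappa}E_{\kappa,\rho}$, and one obtains $|u(R,t)|^{(p+3)/2}\lesssim R^{-(d-1)}\rho^{-\kappa}E_{\kappa,\rho}$, which is precisely \eqref{basicfurther-eq-1}.

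The genuinely delicate steps both occur in the first part. The first is justifying finite speed of propagation for \eqref{wave-La-p} despite the singular potential; the essential observation is that the relevant domain of dependence stays away from the origin, so the standard energy-method proof of uniqueness in a cone applies verbatim (the nonlinearity being locally Lipschitz and the potential smooth on the region in question). The second is that I deliberately avoid a direct exterior energy-flux computation: when $a<0$ the flux through an outgoing light cone carries the indefinite term $\tfrac{a}{2}\tfrac{|u|^2}{|x|^2}$, whose sign is not controlled, whereas routing the estimate through the truncated solution $\tilde u$ lets the global equivalence \eqref{basic-eq-2} restore positivity automatically. Controlling the cutoff commutator in $E(\tilde u_0,\tilde u_1)$ by the exterior data alone is the only remaining subtlety, and it is handled by the localized Hardy inequality in the spirit of Lemma \ref{cutoff energy}; everything else is the routine radial Sobolev computation and bookkeeping of the weights.
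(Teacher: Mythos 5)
Your proposal is correct and follows essentially the same route as the paper: truncate the data outside $\{|x|>r/2\}$, control the energy of the truncated solution via energy conservation, the equivalence \eqref{basic-eq-2} and an exterior Hardy inequality for the cutoff commutator, then invoke finite speed of propagation. The only cosmetic difference is in the pointwise bound \eqref{basicfurther-eq-1}: the paper simply applies Lemma \ref{pointwise estimate 2} to the truncated solution $u^{(|x|-|t|)}$ and quotes its global norms, whereas you re-derive the same radial Sobolev computation with integrals restricted to $\{|y|>|x|\}$ and feed in \eqref{basicfurther-eq-0}; both amount to the identical estimate.
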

\begin{proof}
 Let us fix a radial, smooth cut-off function $\phi: \Rm^d \rightarrow [0,1]$ satisfying
\[
 \phi(x) = \left\{
 \begin{array}{ll} 0, & |x|\leq 1/2; \\
 1, & |x|\geq 1,\end{array}\right.
\]
and define $(u_{0,r}, u_{1,r}) = \phi(x/r)(u_0,u_1)$. A simple calculation shows
\begin{align*}
E(u_{0,r}, u_{1,r}) & \lesssim \int_{\Rm^d} (|\nabla u_{0,r}|^2 + |u_{1,r}|^2+|u_{0,r}|^{p+1}) dx \\
 & \lesssim \int_{|x|>r/2} (|\nabla u_0|^2 + |u_1|^2+|u_0|^{p+1}) dx + \int_{r/2<|x|<r} \frac{|u_0|^2}{|x|^2} dx\\
 & \lesssim \int_{|x|>r/2} (|\nabla u_0|^2 + |u_1|^2+|u_0|^{p+1}) dx\\
 & \lesssim r^{-\kappa}  E_{\kappa,r}(u_0,u_1).
\end{align*}
Let $u^{(r)}$ be the solution to \eqref{wave-La-p} with initial data $(u_{0,r}, u_{1,r})$. By energy conservation law we immediately have
\begin{align}
 \int_{\Rm^d} \Big(|\nabla u^{(r)}|^2 + \frac{|u^{(r)}|^2}{|x|^2} + |u_t^{(r)}|^2+|u^{(r)}|^{p+1}\Big) dx & \lesssim E(u_{0,r}, u_{1,r}) \lesssim r^{-\kappa} E_{\kappa,r}(u_0,u_1). \label{energy estimate tail}
\end{align}
This proves \eqref{basicfurther-eq-0} since we always have $u(x,t) \equiv u^{(r)}(x,t)$ for $|x|>r+|t|$ by finite speed of propagation. If the data is also radial, then we may apply Lemma \ref{pointwise estimate 2} to conclude
\begin{align*}
 |u(x,t)| & = |u^{(|x|-|t|)}(x,t)| \lesssim |x|^{-\frac{2(d-1)}{p+3}} \|u^{(|x|-|t|)}(\cdot,t)\|_{\dot{H}^1}^{\frac{2}{p+3}} \|u^{(|x|-|t|)}(\cdot,t)\|_{L^{p+1}}^{\frac{p+1}{p+3}}\\
 &  \lesssim |x|^{-\frac{2(d-1)}{p+3}} \left((|x|-|t|)^{-\kappa} E_{\kappa,|x|-|t|}(u_0,u_1)\right)^{\frac{2}{p+3}},
\end{align*}
where  we have used \eqref{energy estimate tail} with $r = |x|-|t|$.
\end{proof}
\begin{corollary} \label{integral estimate kappa}
 Let $(u_0,u_1)$ be as in Theorem \ref{main 2}. The corresponding solution $u$ satisfies
 \begin{align*}
  \int_{|x|>R} \frac{|u(x,t)|^2}{|x|^2} dx  & \lesssim R^{-\frac{p+5}{p+3}\kappa_0} E_{\kappa_0} (u_0,u_1)^{\frac{4}{p+3}},\;\;|t|\leq R.
 \end{align*}
\end{corollary}
\begin{proof}
 By the pointwise estimate given in Lemma \ref{decay of tail}, if $|t|\leq R$ we have
  \begin{align*}
  \int_{|x|>R} \frac{|u(x,t)|^2}{|x|^2} dx & \lesssim \int_{|x|>R} |x|^{-\frac{4(d-1)}{p+3}-2}(|x|-|t|)^{-\frac{4\kappa_0}{p+3}} E_{\kappa_0,|x|-|t|}(u_0,u_1)^{\frac{4}{p+3}} dx\\
  & \lesssim R^{-\frac{p+5}{p+3}\kappa_0} E_{\kappa_0} (u_0,u_1)^{\frac{4}{p+3}}.
 \end{align*}
\end{proof}

\begin{proposition} \label{energy estimate inside}
 Let $(u_0,u_1)$ be as in Theorem \ref{main 2}, and let $c > 0$. The corresponding solution $u$ satisfies
 \begin{equation}
  \lim_{t\rightarrow +\infty} \int_{|x|<t-c\cdot t^{1-\kappa_0}} \Big(|\nabla u|^2 + \frac{|u|^2}{|x|^2} + |u_t|^2 \Big) dx = 0. \label{basicfurther-eq-3}
 \end{equation}
\end{proposition}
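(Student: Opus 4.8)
The plan is to combine the local Morawetz estimate of Corollary~\ref{retard fast energy} with the weighted energy decay from Lemma~\ref{decay of tail} and Corollary~\ref{integral estimate kappa}, and then to promote the resulting time-averaged bound to the pointwise-in-time statement \eqref{basicfurther-eq-3} by using the monotonicity of the energy on light cones.

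\textbf{Step 1 (a time-averaged interior bound).} Fix a large $T$ and apply Corollary~\ref{retard fast energy} with inner radius $R=T-cT^{1-\kappa_0}$ and final time $T$; this is admissible since $R<T<2R$ for $T$ large. The left-hand side controls the time integral $\int_R^T\!\int_{|x|<R}(\,\cdots)\,dx\,dt$ of the interior energy, while the right-hand side consists of the exterior energy $\int_{-R}^R\!\int_{|x|>R}(\,\cdots)$ and the two weighted tails carrying $\tfrac{|u|^2}{|x|^3}$ and $\tfrac{|u|^2}{|x|^2}$. By Lemma~\ref{decay of tail}, applied with $r=R-|t|$, the exterior term is $\lesssim\int_0^R s^{-\kappa_0}E_{\kappa_0,s}(u_0,u_1)\,ds$, which is $o(R^{1-\kappa_0})$ because $\kappa_0\in(0,1)$ and $E_{\kappa_0,s}\to0$ as $s\to\infty$; the weighted tails are estimated through Corollary~\ref{integral estimate kappa}, and each contribution is again $o(R^{1-\kappa_0})$ — the borderline powers close precisely because $p<p_e=\tfrac{d+2}{d-2}$ forces $d-2-\tfrac{4(d-1)}{p+3}<0<\kappa_0$. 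Since the window has length $T-R=cT^{1-\kappa_0}\simeq R^{1-\kappa_0}$, dividing shows that the average over $t\in[R,T]$ of $\int_{|x|<R}(\,\cdots)\,dx$ tends to $0$ as $T\to\infty$.

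\textbf{Step 2 (from average to pointwise).} This is the heart of the matter. I would use that, for a fixed apex $t_0$, the signed local energy $E(t;B(0,t_0-t))$ on the \emph{backward} light cone is non-increasing in $t$; this follows from the flux identity behind Proposition~\ref{energy flux} together with the sharp Hardy inequality \eqref{energy-flux-4} (in its $u_r-u_t$ form), which renders the flux sign-definite exactly when $-\tfrac{(d-2)^2}{4}<a<0$. Choosing $t_0$ so that the backward cone meets $\{t=T\}$ in the ball $B(0,R)$, monotonicity dominates the interior energy at $t=T$ by the cone energy on the slices over $t\in[\rho,T]$ with $\rho=T-\tfrac c2T^{1-\kappa_0}$; these slices are balls of radius $\le\rho$, hence lie inside the fixed Morawetz ball $B(0,\rho)$, so their time average is controlled by Step~1 and is $o(1)$. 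I expect this step to be the main obstacle, the delicate point being that the potential density $\tfrac a2\tfrac{|u|^2}{|x|^2}$ is not sign-definite, so comparing the signed energy on nested balls is not automatic; the annular discrepancy has to be absorbed through the boundary terms furnished by Lemma~\ref{cutoff energy}.

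\textbf{Step 3 (signed versus positive energy).} Finally, the quantity in \eqref{basicfurther-eq-3} carries the positive weight $+\tfrac{|u|^2}{|x|^2}$, whereas the conserved and monotone energy uses the signed weight $\tfrac a2\tfrac{|u|^2}{|x|^2}$. I would pass between them with the local Hardy inequality \eqref{morawetz-eq-2}, which controls $\int_{|x|<R}\big(|\nabla u|^2+\tfrac{|u|^2}{|x|^2}\big)$ by the signed interior energy plus a surface term $\int_{|x|=R}\tfrac{|u|^2}{|x|}\,dS$; this surface term is then disposed of using the radial decay of Lemma~\ref{pointwise estimate 2} (and the sharper tail decay of Lemma~\ref{decay of tail} where a better rate is needed), if necessary after averaging the comparison radius over a short interval to land on a favourable sphere. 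Assembling Steps~1–3 yields \eqref{basicfurther-eq-3}.
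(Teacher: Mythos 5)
Your proposal follows, in all essentials, the same route as the paper's proof. Your Step 1 is exactly the paper's treatment of the terms $I_1$ and $I_2+I_3$ culminating in \eqref{to prove inner 2} (Corollary \ref{retard fast energy} with a window of length $\simeq t^{1-\kappa_0}$, exterior energy controlled by Lemma \ref{decay of tail}, weighted tails by Corollary \ref{integral estimate kappa}); your Step 3 is the paper's opening reduction, which replaces the positive quantity by the signed energy using Lemma \ref{cutoff energy} together with $|u(x,t)|\lesssim |x|^{-\frac{2(d-1)}{p+3}}$ to kill the surface term. Your Step 2 differs only cosmetically: you compare $E(T;B(0,R))$ with \emph{earlier}-time slices along a backward cone, while the paper bounds $E(t;B(0,t-c\,t^{1-\kappa_0}))$ by the average of $E(t';B(0,t))$ over the \emph{later} window $t'\in(t,t+c\,t^{1-\kappa_0})$ along a forward cone; by time reversal these are mirror images of one another.

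There is, however, one genuine flaw, and it sits at the heart of your Step 2: the claim that $E(t;B(0,t_0-t))$ is non-increasing because the Hardy inequality \eqref{energy-flux-4} ``renders the flux sign-definite'' is false. The inequality \eqref{energy-flux-4} is \emph{global} on the cone: it comes from applying Hardy on all of $\Rm^d$ to the trace $\tilde{u}(x)=u(x,t_0-|x|)$, and it says nothing about the flux through a proper piece $\{t_1<\bar{t}<t_2\}$ of the cone. Indeed, for $a\in(-\tfrac{(d-2)^2}{4},0)$ the signed flux density $\tfrac12|u_r-u_t|^2+\tfrac{a}{2}\tfrac{|u|^2}{|x|^2}$ is negative wherever the cone trace behaves like the $\mathcal{L}_a$-harmonic profile $|x|^{-\sigma}$, since there it equals $\sigma\big(\sigma-\tfrac{d-2}{2}\big)|x|^{-2\sigma-2}<0$; so no slice-by-slice monotonicity can be extracted from Hardy alone. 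What is true, and what suffices, is \emph{almost} monotonicity: the flux through any piece is bounded below by $\tfrac{a}{2\sqrt{2}}\int_{\Sigma}\tfrac{|u|^2}{|x|^2}\,dS$, and by Lemma \ref{pointwise estimate 2} this error is $\lesssim c\,T^{\,d-2-\kappa_0-\frac{4(d-1)}{p+3}}\rightarrow 0$ because $p<1+\tfrac{4}{d-2}$. This is precisely the device the paper uses on its forward cones, and the same pointwise-decay bound also disposes of your nested-ball discrepancy: the annulus separating the cone slice from the fixed Morawetz ball has inner radius tending to infinity, so its potential contribution is $o(1)$ directly, with no need for an annular version of Lemma \ref{cutoff energy} (which, as stated, only covers balls centered at the origin). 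With the monotonicity claim replaced by this almost-monotonicity-plus-decay argument, your proof closes and coincides with the paper's.
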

\begin{proof}
We start by recalling the conclusion of Corollary \ref{retard fast energy}
 \begin{align*}
 \int_{|x|<t} J_u^+(x,t) dx &\lesssim \frac{1}{t}\int_{-t}^t \int_{|x|> t} \Big(|\nabla u|^2 + |u_t|^2 + |u|^{p+1}\Big) dxdt' + \sum_{\pm} \int_{|x|>t} \frac{|u(x,\pm t)|^2}{2|x|^2} dx\\
 & \doteq I_1+I_2.
 \end{align*}
 Upper bound of $I_2$ can be found in Corollary \ref{integral estimate kappa}:
\[
 I_2 \lesssim t^{-\frac{p+5}{p+3}\kappa_0} E_{\kappa_0}(u_0,u_1).
\]
 We may also find an upper bound of $I_1$ by applying \eqref{basic-eq-2} and Lemma \ref{decay of tail}:
\begin{align*}
I_1 & \lesssim \frac{1}{t}\left[t^{(1-\kappa_0)/2} E +  \int_{-t+t^{(1-\kappa_0)/2}}^{t-t^{(1-\kappa_0)/2}} \int_{|x|> t} \Big(|\nabla u|^2 +| u_t|^2 + |u|^{p+1}\Big) dxdt'\right]\\
 & \lesssim \frac{1}{t}\left[t^{(1-\kappa_0)/2} E +  \int_{-t+t^{(1-\kappa_0)/2}}^{t-t^{(1-\kappa_0)/2}} (t-|t'|)^{-\kappa_0} E_{\kappa_0, t^{(1-\kappa_0)/2}} (u_0,u_1) dt'\right]\\
 & \lesssim t^{(1-\kappa_0)/2-1} E + t^{-\kappa_0} E_{\kappa_0, t^{(1-\kappa_0)/2}} (u_0,u_1).
\end{align*}
Combining these upper bounds we have
\[
  \int_{|x|<t} J_u^+(x,t) dx = o(t^{-\kappa_0}).
\]
We recall the definition of $J_u^+$ given in Proposition \ref{morawetz-prop-0} and obtain
\begin{align}
 \int_{|x|<t} \left( \frac{|u|^2}{t^2} + \left| \frac{|x|}{t} u_r + \frac{(d-1)u}{2t}+ u_t\right|^2 + |\slashed{\nabla} u|^2\right) dx & = o(t^{-\kappa_0}).\label{J1 estimate}\\
 \int_{|x|<t} \frac{t^2-|x|^2}{t^2} \Big |u_r + \tfrac{\sigma}{|x|}u\Big|^2 dx & = o(t^{-\kappa_0}). \label{J2 estimate}
\end{align}
If $|x|<t-c\cdot t^{1-\kappa_0}$, then we have $\frac{t^2-|x|^2}{t^2} \geq c t^{-\kappa_0}$. Thus \eqref{J2 estimate} implies
\[
 \lim_{t\rightarrow +\infty} \int_{|x|<t-c\cdot t^{1-\kappa_0}} \Big |u_r + \tfrac{\sigma}{|x|}u\Big|^2 dx = 0.
\]
In addition, the universal upper bound $  |u(x,t)| \lesssim |x|^{-\frac{2(d-1)}{p+3}}$ in Lemma \ref{pointwise estimate 2} gives
\[
 \lim_{t\rightarrow +\infty} \int_{|x|=t-c\cdot t^{1-\kappa_0}} \frac{|u(x,t)|^2}{|x|} dS(x) = 0.
\]
Thus we may apply Lemma \ref{cutoff energy} and obtain
\begin{equation} \label{J3 estimate}
 \lim_{t\rightarrow +\infty} \int_{|x|<t-c\cdot t^{1-\kappa_0}} \left(|u_r|^2 + \frac{|u|^2}{|x|^2}\right) dx = 0.
\end{equation}
Finally we observe the inequality
\[
 |u_t|^2 \lesssim \frac{|u|^2}{t^2} + \left| \frac{|x|}{t} u_r + \frac{(d-1)u}{2t}\pm u_t\right|^2 + |u_r|^2
\]
and complete the proof by combining \eqref{J1 estimate} and \eqref{J3 estimate}.
\end{proof}

Now, {\bf we are ready to prove Theorem \ref{main 2}}.
Let $u$ be a radial solution to \eqref{wave-La-p} with initial data $(u_0,u_1)$ satisfying $ E_{\kappa_0}(u_0,u_1) \lesssim E_{\kappa} (u_0,u_1)< +\infty$. Then, for $v$ given as in Section \ref{mtheod-characteristic}, we have
\begin{align*}
 \limsup_{t \rightarrow +\infty} \int_{\Rm^d} \Big(|\nabla u(x,t) - \nabla v(x,t)|^2 + |u_t(x,t)-v_t(x,t)|^2\Big) dx \leq L_1 + L_2 + L_3,
\end{align*}
with
\begin{align*}
 L_1 = & \limsup_{t \rightarrow +\infty} \int_{|x|<t-c\cdot t^{1-\kappa_0}} \Big(|\nabla u(x,t) - \nabla v(x,t)|^2 + |u_t(x,t)-v_t(x,t)|^2\Big) dx; \\
 L_2 = & \limsup_{t \rightarrow +\infty} \int_{t-c\cdot t^{1-\kappa_0}<|x|<t+R} \Big(|\nabla u(x,t) - \nabla v(x,t)|^2 + |u_t(x,t)-v_t(x,t)|^2\Big) dx;\\
 L_3 = & \limsup_{t \rightarrow +\infty} \int_{|x|>t+R} \Big(|\nabla u(x,t) - \nabla v(x,t)|^2 + |u_t(x,t)-v_t(x,t)|^2\Big) dx,
\end{align*}
where $c,R>0$ are constants to be determined.
We have already known $L_3 = 0$ and $L_2 \lesssim c$ by the conclusion of Theorem \ref{main 1} and \eqref{difference middle 2}. We also have $L_1 = 0$ by a combination of Proposition \ref{energy estimate inside} and \eqref{free limit 2}. In summary we have
\[
  \limsup_{t \rightarrow +\infty} \int_{\Rm^d} \Big(|\nabla u(x,t) - \nabla v(x,t)|^2 + |u_t(x,t)-v_t(x,t)|^2\Big) dx \lesssim c
\]
for all positive
constants
 $c$. We then make $c\rightarrow 0$ and finish the proof.

\appendix

\section{}

\setcounter{equation}{0}
\renewcommand\theequation{A.\arabic{equation}}

\subsection{The asymptotic behaviour of linear solution}
 Let $u$ be a finite-energy radial solution to the linear wave equation with an inverse-square potential
\[
 \left\{\begin{array}{l}  u_{tt} +\Big(- \Delta + \frac{a}{|x|^2} \Big) u = 0, \qquad (x,t)\in \Rm^d \times \Rm,\\
 (u,u_t)|_{t=0} = (u_0,u_1) \in \dot{H}^1 \times L^2. \end{array}\right.
\]
Here $d \geq 3$ and $a > -\tfrac{(d-2)^2}{4}$. We prove that there exists a free wave $v$, i.e., a solution to the linear wave equation $\partial_t^2 v - \Delta v = 0$, so that
\begin{equation} \label{appendix-eq-01}
 \lim_{t \rightarrow +\infty} \|(u(\cdot,t), u_t(\cdot,t)) - (v(\cdot,t), v_t(\cdot,t))\|_{\dot{H}^1 \times L^2 } = 0.
\end{equation}

The proof of \eqref{appendix-eq-01} divides into three steps. We first prove a few estimates in step 1.
Following  the same argument as in Lemma \ref{decay of tail}, one can obtain
\begin{align}
 &\int_{|x|>|t|+r} \Big(|\nabla u(x,t)|^2 + \frac{|u(x,t)|^2}{|x|^2} + |u_t(x,t)|^2\Big) dx  \lesssim E_{0,r}, \label{tail estimate 1 linear}\\
 &|u(x,t)|  \lesssim E_{0,r}^{1/2} |x|^{-\frac{d-2}{2}}, \quad |x|\geq r+|t|\label{tail estimate 1 linear-0}
\end{align}
such that
\begin{equation}\label{appendix-eq-1}
 E_{0,r} = \int_{|x|>r/2} \Big(|\nabla u_0(x)|^2 + |u_1(x)|^2\Big) dx \rightarrow 0, \quad \hbox{as}\; r\rightarrow \infty.
\end{equation}
We may combine \eqref{tail estimate 1 linear} with the universal estimate $|u(x,t)| \lesssim |x|^{-\frac{d-2}{2}}$ and obtain
\begin{equation} \label{improved H3}
 \lim_{t\rightarrow +\infty} \int_{|x|>t} \frac{|u(x,\pm t)|^2}{|x|^2} dx = 0.
\end{equation}
We also have by a direct computation
\begin{align*}
 |u(r_1,t) - u(r_2,t)| &= \int_{r_1}^{r_2} |u_r (r,t)| dr \lesssim \Big(\int_{r_1}^{r_2} r^{d-1} |u_r(r,t)|^2 dr\Big)^{1/2} \Big(\int_{r_1}^{r_2} r^{-(d-1)} dr\Big)^{1/2} \\
 & \lesssim (r_2-r_1)^{1/2} r_1^{-\frac{d-1}{2}}.
\end{align*}
Thus if $(1-\delta)|t|\leq r\leq |t|+r_0\leq (1+\delta)|t|$ with a small constant $\delta \ll 1$, then we have
\begin{equation} \label{improved pointwise H1}
 |u(r,t)| \leq |u(|t|+r_0,t)| + |u(r,t)- u(|t|+r_0,t)| \lesssim (E_{0,r_0}^{1/2} + \delta^{1/2}) r^{-\frac{d-2}{2}}.
\end{equation}
It immediately follows that
\begin{equation} \label{improved H2}
 \limsup_{t\rightarrow +\infty} \int_{|x|= (1-\delta) t} \frac{|u(x,t)|^2}{|x|} dS(x) \lesssim \delta.
\end{equation}
In step 2 we prove that there exists a finite-energy free wave $v$, so that for $0<\delta<1$,
\begin{equation} \label{outer part linear}
 \limsup_{t \rightarrow +\infty} \int_{|x|>(1-\delta)t} \Big(|\nabla u(x,t) - \nabla v(x,t)|^2 + |u_t(x,t)-v_t(x,t)|^2\Big) dx \lesssim \delta.
\end{equation}

Define $w = r^{\frac{d-1}{2}} u(r,t)$, then $w$ satisfies
\begin{equation}\label{appendix-eq-02}
  (\partial_t^2 -\partial_r^2)w = -(\lambda_d+a) r^{\frac{d-5}{2}} u.
\end{equation}
Note that $|u(r,t)| \lesssim r^{-\frac{d-2}{2}}$, applying  method of characteristic lines, we can obtain $g_+, g_- \in L^2 (\Rm)$ such that
\begin{align*}
&|(w_t+w_r)(r,t)-2g_-(r+t)| \lesssim r^{-1/2},& &r>1;&\\
&|(w_t-w_r)(r,t)-2g_+(t-r)| \lesssim r^{-1/2},& &r>1.&
\end{align*}
Thus we have ($\delta \ll 1$, $R>0$ are constants)
\[
 \limsup_{t\rightarrow +\infty} \int_{(1-\delta)t}^{t+R} (|w_t(r,t)-g_+(t-r)|^2 + |w_r(r,t)+g_+(t-r)|^2) dr \lesssim \delta.
\]
By the identity $w_r = r^{\frac{d-1}{2}} u_r + \frac{d-1}{2} r^{\frac{d-3}{2}} u$ and the upper bound
\[
 \limsup_{t\rightarrow +\infty} \int_{(1-\delta)t}^{t+R} r^{d-3} |u(r,t)|^2 dr \lesssim  \limsup_{t\rightarrow +\infty} \int_{(1-\delta)t}^{t+R} \frac{1}{r} dr \lesssim \delta.
\]
We have
\[
 \limsup_{t\rightarrow +\infty} \int_{(1-\delta)t}^{t+R} (|r^{\frac{d-1}{2}} u_t(r,t)-g_+(t-r)|^2 + |r^{\frac{d-1}{2}} u_r(r,t)+g_+(t-r)|^2) dr \lesssim \delta.
\]
By radiation fields, there exists a free wave $v$ so that
\[
 \limsup_{t \rightarrow +\infty} \int_{(1-\delta)t<|x|<t+R} \Big(|\nabla u(x,t) - \nabla v(x,t)|^2 + |u_t(x,t)-v_t(x,t)|^2\Big) dx \lesssim \delta.
\]
Combining this with \eqref{tail estimate 1 linear}, we  obtain \eqref{outer part linear}.

In step 3 we show
\begin{equation} \label{inner part linear}
 \limsup_{t \rightarrow +\infty} \int_{|x|<(1-\delta)t} \Big(|\nabla u(x,t) - \nabla v(x,t)|^2 + |u_t(x,t)-v_t(x,t)|^2\Big) dx \lesssim \delta.
\end{equation}
We follow a similar argument to Proposition \ref{morawetz-prop-0} and Corollary \ref{retard fast energy} (simply ignore the terms involving $|u|^{p+1}$), and obtain a Morawetz-type estimate
\begin{align*}
 \int_{|x|<t} J_u^+(x,t) dx
 & \lesssim \frac{1}{t}\int_{-t}^t \int_{|x|> t} \Big(|\nabla u|^2 + |u_t|^2 \Big) dxdt' + \sum_{\pm} \int_{|x|>t} \frac{|u(x,\pm t)|^2}{2|x|^2} dx.
 \end{align*}
We claim that the right hand side converges to zero as $t\rightarrow +\infty$. In fact we may show the convergence of the first term in the same way as in Proposition \ref{energy estimate inside} by \eqref{tail estimate 1 linear}. The convergence of the second term has been known \eqref{improved H3}. As a result
\[
 \lim_{t\rightarrow +\infty} \int_{|x|<t} J_u^+(x,t) dx = 0.
\]
We recall the definition of $J_u^+$ and obtain
\begin{align*}
 \lim_{t\rightarrow +\infty} \int_{|x|<t} \left( \frac{|u|^2}{t^2} + \left| \frac{|x|}{t} u_r + \frac{(d-1)u}{2t}\pm u_t\right|^2 + |\slashed{\nabla} u|^2\right) dx & = 0. \\ \lim_{t\rightarrow +\infty}  \int_{|x|<(1-\delta)t}  \Big |u_r + \tfrac{\sigma}{|x|}u\Big|^2 dx & = 0.
\end{align*}
Next we combine these estimates with \eqref{improved H2}, Proposition \ref{decay-out-cone}, and Lemma \ref{cutoff energy} to complete the proof of \eqref{inner part linear}, as we did in the proof of Proposition \ref{energy estimate inside}.  Finally we combine \eqref{outer part linear} and \eqref{inner part linear} to conclude
\[
 \limsup_{t \rightarrow +\infty} \int_{\Rm^d} \Big(|\nabla u(x,t) - \nabla v(x,t)|^2 + |u_t(x,t)-v_t(x,t)|^2\Big) dx \lesssim \delta.
\]
We then finish the proof by letting $\delta \rightarrow 0^+$.

\subsection{Square function estimates}

In this subsection, employing the Stein-Tomas restriction theorem in Lorentz space, we show
\begin{equation}\label{square-ets}
\Big\|\int_{\Rm} e^{is|\nabla|}  F(\cdot,s) ds \Big\|_{\dot H^{-\frac12(\Rm^d)}}
\leq C \|F\|_{L^{\tfrac{2d}{d+2},2 }_x L^2_t(\Rm\times\Rm^d) },
\end{equation}
for radial function $F\in {L^{\tfrac{2d}{d+2},2 }_x L^2_t(\Rm\times\Rm^d) }$.

 \begin{proof}

The proof follows the arguments of \cite{RV-1993}.
By duality, it suffices  to show
\begin{equation}
\| |\nabla |^{-\gamma}e^{it|\nabla|}  f \|_{L^{q,2}_x L^2_t(\Rm\times\Rm^d)} \leq C \|f\|_{L^2(\Rm^d)},
\end{equation}
where  $\gamma=\frac{d}2-\frac{1}{2}-\frac{d}{q}$ and $q\geq \frac{2(d+1)}{d-1}$.
Utilizing the polar coordinates, we have
\[
|\nabla |^{-\gamma}e^{it|\nabla|}  f(x) =\int_0^\infty e^{it r} r^{-\gamma} \int_{|\xi|=r} e^{ix \xi} \hat f(\xi) d\sigma^{n-1}_r(\xi) dr.
\]
By Plancherel theorem with respective to variable $t$, we have
\begin{align*}
\| |\nabla |^{-\gamma}e^{it|\nabla|}  f(x) \|_{L^{q,2}_xL^2_t} = &
 \Big\| \Big(  \int_0^\infty \Big|    \int_{|\xi|=r} e^{ix \xi} \hat f(\xi) d\sigma^{n-1}_r(\xi)  \Big|^2  r^{-2\gamma} dr \Big)^\frac12\Big\|_{L^{q,2}_x}\\
  \leq  &
 \Big(  \int_0^\infty    \Big\|    \int_{|\xi|=r} e^{ix \xi} \hat f(\xi) d\sigma^{n-1}_r(\xi)   \Big\|_{L^{q,2}_x}^2  r^{-2\gamma} dr \Big)^\frac12  \\
   \leq & \Big(  \int_0^\infty  \int_{|\xi|=r} | \hat f(\xi) |^2 d\sigma^{n-1}_r(\xi)   dr \Big)^\frac12
  \\
 = &  \|f\|_{L^2_x(\Rm^d)}.
\end{align*}
where we used the Stein-Tomas restriction theorem of the Lorentz spaces in \cite{Bak-Segger-2011} and the following Minkowski inequality.
\begin{align}\label{add-1}
\|f(x,y)\|_{L^{p,r}_x L^r_y} \le \|f(x,y)\|_{ L^r_y L^{p,r}_x}, \;\; 1\leq  r< p <\infty. 
 \end{align}
This completes the proof the square function estimate in Lorentz space.

For the sake of completeness, we give a proof of the  Minkowski inequalities in Lorentz space. In fact, for $1\leq  r< p<\infty$, we have
\begin{align*}
\|f(x,y)\|_{L^{p,r}_x L^r_y} = &
 \Big\| |f(x,y)|^r \Big\|^{\frac1r}_{L^{\frac{p}{r},1}_x L^1_y} \\
 \approx  & \Big( \sup_{\|g\|_{L^{(\frac{p}{r})',\infty}} \leq 1 }   \int |g(x)|\int |f(x,y)|^r dy   dx \Big)^\frac1r  \\
= & \Big( \sup_{\|g\|_{L^{(\frac{p}{r})',\infty}} \leq 1 }   \int \int |f(x,y)|^r  |g(x)| dx dy    \Big)^\frac1r \\
\leq &  \| |f|^r\|_{L^1_y L^{\frac{p}{r},1}_x}^\frac1r = \|f(x,y)\|_{ L^r_y L^{p,r}_x}.
 \label{add-2}\end{align*}
\end{proof}

\textbf{Acknowledgments:} We thank the anonymous referee and the associated editor for their invaluable comments
which helped to improve the paper. This project was suppported by the National Key R\&D program of China: No.2022YFA1005700.
C. Miao is supported in part by the NSF of China under grant No.11831004. R. Shen is supported by the NSF of China under grant No.11771325, No. 12071339.
T. Zhao is supported in part by the NSF  of China under grant No. 12101040 and the Fundamental Research  Funds for the Central Universities (FRF-TP-20-076A1).

\end{document}